\newcommand{\NN}{\mathbb{N}}
\newcommand{\ZZ}{\mathbb{Z}}
\newcommand{\QQ}{\mathbb{Q}}
\newcommand{\RR}{\mathbb{R}}
\newcommand{\CC}{\mathbb{C}}
\newcommand{\hilbert}{\mathfrak{H}}
\newcommand{\hilbertK}{\mathfrak{K}}
\newcommand{\hilbertL}{\mathfrak{L}}
\newcommand{\bounded}{\mathbb{B}}
\newcommand{\comp}{\mathcal{K}}  % The compact operators
\newcommand{\M}{\mathbb{M}}
\newcommand{\id}{\mathsf{id}}
\newcommand{\ip}[2]{\left( #1 \mid #2 \right)}
\newcommand{\norm}[1]{\left\| #1 \right\|}
\newcommand{\prt}[1]{\left( #1 \right)}
\newcommand{\unit}{\mathbf{1}}
\newcommand{\unitaries}{\mathcal{U}}
\newcommand{\auto}{\operatorname{Aut}}
\newcommand{\card}{\mathsf{Card}}
\newcommand{\diag}{\operatorname{diag}}
\newcommand{\domain}{\mathcal{D}}
\newcommand{\End}{\mathsf{End}}
\newcommand{\fourier}{\mathcal{F}}
\newcommand{\GL}{\mathsf{GL}}
\newcommand{\hip}[2]{\left( #1 \mid #2 \right)} % Hilbert space
\newcommand{\lie}{\mathsf{Lie}}
\newcommand{\mc}{\mathsf{MC}}
\newcommand{\Image}{\operatorname{im}}
\newcommand{\rip}[2]{\left \langle #1 , #2 \right \rangle}
\newcommand{\lip}[3]{\sideset{_{#1}}{}{\mathop{\langle #2}} , #3 {\mathop{\rangle}}}
\newcommand{\module}{\mathsf{X}}
\newcommand{\moduleY}{\mathsf{Y}}
\newcommand{\nctorus}{\mathcal{A}}
\newcommand{\op}{\mathcal{L}}
\newcommand{\schwartz}{\mathcal{S}}
\newcommand{\spam}{\operatorname{span}}
\newcommand{\torus}{\mathbb{T}}
\newcommand{\trace}{\operatorname{tr}}
\newcommand{\YM}{\operatorname{YM}}
\theoremstyle{plain}
{\theorembodyfont{\rmfamily}
\newtheorem{definition}{Definition}[section]
}
\newtheorem{lemma}[definition]{Lemma}
\newtheorem{proposition}[definition]{Proposition}
\newtheorem{theorem}[definition]{Theorem}
\newtheorem{corollary}[definition]{Corollary}
\newtheorem{example}[definition]{Example}
\newcommand{\qed}{\hfill $\square$ \\}
\newenvironment{proof}{\noindent \textbf{Proof.}}{\qed \vspace{0.1cm}}
\begin{document}
\title{Classification of Connections on Higher-Dimensional Non-Commutative Tori}
\author{Nest., R. and Svegstrup, R. D.\footnote{Supported in part by the Canon Foundation in Europe}}
%\abstract{If X is a full, finitely generated, projective module over a non-commutative torus, the Yang-Mills functional attains its minimum exactly on the flat connections on X. We classify the flat connections on modules admitting integrable connections.}
\maketitle

\section{Introduction}
Let $\theta \in \M_n(]-1;1[)$ be an antisymmetric matrix and let
$\rho_\theta$ be the bicharacter on $\ZZ^N$ given by
$\rho_\theta(x,y)=\exp(2\pi i x^t \theta y)$. The twisted group $C^*$-algebra
$\nctorus_\theta := C^*(\ZZ^N,\rho)$ is called a non-commutative
$N$-torus due to its being isomorphic to $C(\torus^N)$ in the
commutative case, $\theta=0$.  

During the eighties the non-commutative tori were studied extensively,
most notably by Rieffel, see \cite{Rieffel-CaseStudy} for an overview,
and the structure of the algebra itself and the category of projective
modules over it are now well-known, see
e.g. \cite{Rieffel-Rotations,Rieffel-Cancellation,Rieffel-Projective}. The
non-commutative torus $\nctorus_\theta$ 
possesses a high degree of symmetry, in fact the $N$-torus $\torus^N =
\hat{\ZZ}^N$ acts as a group of automorphisms, $\alpha: \torus^N
\rightarrow \auto(\nctorus_\theta)$, forming a $C^*$-dynamical system,
and this gives rise to a natural smooth sub-$*$-algebra
$\nctorus_\theta^\infty$ consisting of the elements upon which the
$N$-torus acts smoothly. Said smooth part is an important and
tractable example of a non-commutative differentiable manifold
\cite{Connes-NoncommutativeGeometry}.
Lately the interest in the non-commutative tori has been revived as
they seem to appear naturally in string theory \cite{ConnesDouglasSchwarz}. One
subject of interest in that context is the classification of flat
connections on projective $\nctorus_\theta$-modules as these determine
the minima of the Yang-Mills functional. In
the case $N=2$ the flat connections were completely classified by
Connes and Rieffel in 1987 \cite{ConnesRieffel}. In this paper we
extend their results to the higher-dimensional case. In fact the
following holds:

\begin{theorem}
Let $\module$ be a full, finitely generated, projective right
$\nctorus_\theta$-module admitting an integrable connection. Then the
moduli space of $\module^\infty$ is homeomorphic to $(\torus^n/
\sigma_n)^N$ for some $n \in \NN$.
\end{theorem}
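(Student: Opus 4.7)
The plan is to reduce the problem to joint spectral analysis of the commuting covariant derivatives that come with an integrable connection, and then identify their unordered spectra with symmetric products of the circle. First I would fix notation: write $\delta_1,\ldots,\delta_N$ for the commuting derivations of $\nctorus_\theta^\infty$ obtained by differentiating $\alpha$, and regard a connection $\nabla=(\nabla_1,\ldots,\nabla_N)$ on $\module^\infty$ as an $N$-tuple of skew-adjoint Leibniz operators, $\nabla_j(\xi a)=(\nabla_j\xi)a+\xi\delta_j(a)$. Integrability says $[\nabla_i,\nabla_j]=0$, so the $N$-tuple generates an action of $\RR^N$ on $\module^\infty$ by unbounded operators. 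The moduli space is the set of such tuples modulo gauge, where the gauge group is the unitary group of $\End_{\nctorus_\theta^\infty}(\module^\infty)$; by Rieffel's Morita equivalence results this endomorphism algebra is itself the smooth part of a non-commutative torus $\nctorus_{\theta'}$, and this will be the key input.

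Next I would invoke the structure theorem for full, finitely generated projective modules over $\nctorus_\theta$: up to isomorphism $\module^\infty$ is a direct sum of basic Heisenberg modules, and the commutation of $\nabla$ with the right $\nctorus_\theta^\infty$-action together with integrability places strong constraints on the shape of $\nabla$. Inside $\End_{\nctorus_\theta^\infty}(\module^\infty) \cong \nctorus_{\theta'}^\infty$ the covariant derivatives $\nabla_j$ differ from the canonical derivations of $\nctorus_{\theta'}^\infty$ by inner derivations plus scalars; in other words, after subtracting a reference integrable connection the discrepancy $A_j = \nabla_j - \nabla_j^{(0)}$ is a self-adjoint element of $\nctorus_{\theta'}^\infty$ (up to a scalar), and integrability becomes a commutativity condition for the $A_j$. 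Gauging by unitaries of $\nctorus_{\theta'}^\infty$ conjugates the $A_j$ simultaneously and shifts each by the corresponding derivation of the unitary, so the moduli question reduces to classifying $N$-tuples of commuting self-adjoint elements of $\nctorus_{\theta'}^\infty$ modulo simultaneous unitary conjugation and derivation-shifts.

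To finish, I would perform joint spectral decomposition: commuting self-adjoints in $\nctorus_{\theta'}^\infty$ generate a commutative $*$-subalgebra that, under integrability and fullness, is maximal abelian and has spectrum a finite union of $\torus^N$-orbits; equivalently, a trivialization of the corresponding Heisenberg module identifies each commuting $A_j$ with a multiplication operator whose spectrum is an unordered $n$-tuple in $\torus$. The derivation-shift ambiguity absorbs any global translation, and unitary gauge absorbs the ordering, yielding an element of $\torus^n/\sigma_n$ in each of the $N$ directions. The product structure $(\torus^n/\sigma_n)^N$ then comes from showing that the spectra in the $N$ directions are independent parameters, which follows because the derivations $\delta_j$ span independent directions of the torus action and the joint spectrum of commuting operators factorises over them. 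The main obstacle is precisely this decoupling: in the $N=2$ Connes--Rieffel treatment the non-commutativity only couples one pair of directions, but for general $N$ one must check that the integrability condition, combined with the maximal abelianness forced by fullness, really does allow the spectral data to factor as a product rather than being entangled across the $\binom{N}{2}$ pairs; ensuring that the well-defined integer $n$ coming out of this decomposition is the same in every direction (and is determined by the Chern character of $\module$) is where the genuinely new higher-dimensional work will be concentrated.
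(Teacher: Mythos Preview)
Your proposal has a genuine structural gap, and it stems partly from taking the statement's formula at face value: the moduli space is \emph{not} $(\torus^n/\sigma_n)^N$ but $(\torus^N)^n/\sigma_n$, with $\sigma_n$ acting diagonally on all $N$ coordinates at once. (The introduction's formula is a typographical slip; the body of the paper states and proves the latter.) Your entire ``decoupling'' paragraph is aimed at the wrong target: the spectral data in the $N$ directions are \emph{not} independent. What one actually obtains is an unordered $n$-tuple of points in $\torus^N$, i.e.\ an unordered list of ordered $N$-tuples $\{(\lambda_1^1,\ldots,\lambda_1^N),\ldots,(\lambda_n^1,\ldots,\lambda_n^N)\}$, because the gauge unitary that permutes the summands acts on all $\nabla_k$ simultaneously. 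So the step you flag as ``the main obstacle'' is not merely difficult---it is false as stated.

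Even aside from the target space, the analytic heart of the argument is missing. You assert that commuting skew-adjoint elements $A_1,\ldots,A_N$ of $\nctorus_{\theta'}^\infty$ (or rather of $\M_n(\nctorus_\omega^\infty)$, since the endomorphism algebra is in general a matrix algebra over a torus) can be identified with ``multiplication operators'' after trivialization; but this is precisely the hard content that must be proved, and a noncommutative torus has no functional calculus that hands you such a decomposition. The paper's route is quite different: one forms the elliptic operator $H=-\sum_k(D_k+\pi(h_k))^2$ on $L^2$, proves it is self-adjoint with compact resolvent, shows its eigenvectors are smooth, and then uses that the $\nabla_k$ commute with $H$ to extract a common smooth eigenvector inside a finite-dimensional eigenspace of $H$. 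From this eigenvector one builds a partial isometry in $\M_n(\nctorus_\omega^\infty)$ and proceeds by induction on its rank to a gauge unitary $u$ with $\pi(u^*)\nabla_k\pi(u)=D_k+\pi(\Lambda_k)$ for commuting skew-adjoint $\Lambda_k\in\M_n(\CC)$. Simultaneous diagonalization of the scalar matrices $\Lambda_k$ then gives the $(\torus^N)^n/\sigma_n$ parametrization; injectivity of this map is handled by a separate combinatorial argument using Hall's marriage theorem applied to the entries of the intertwining unitary. None of this is visible in your sketch: ``maximal abelian with spectrum a finite union of $\torus^N$-orbits'' is neither proved nor true in the generality you claim, and fullness of $\module$ does not force anything of the sort.
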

The notion of integrable connections is introduced below and these do
in fact exist in the generic case.

The approach below differs from \cite{ConnesRieffel} in as much as the
analysis of the structure of the moduli space of flat connections is
first done on the free $\nctorus_\theta$-module,	
$L^2(\nctorus_\theta)$, and then a trick of Connes and Rieffel
\cite[Theorem 5.5]{ConnesRieffel} is used to transfer it to the
case of a general projective module admitting an integrable connection.

The authors would like to thank Hanfeng Li for kindly reading through an earlier version of the manuscript, pointing out some mistakes and for helpfully suggesting alternate proofs.

\section{Preliminaries}
\subsection{The Smooth Non-Commutative Torus}
The non-commutative torus $\nctorus_\theta$, defined in the
introduction, is the universal
$C^*$-algebra generated by $N$ unitaries $u_1,\ldots,u_N$ satisfying
the commutation relations 
\begin{equation*}
u_k u_l = \exp(2\pi i \theta_{kl}) u_l u_k 
\end{equation*}
and every element of $\nctorus_\theta$ can be written uniquely in the form
\begin{equation*}
\sum_{\alpha \in \ZZ^N} c_\alpha u^\alpha
\end{equation*}
where $\alpha = (\alpha_1,\ldots,\alpha_N)$ is a multi-index,
$u^\alpha = u_1^{\alpha_1} \cdots u_N^{\alpha_N}$, $c_\alpha \in \CC$
and the sum converges in norm in $\nctorus_\theta$. Note that there is
no canonical choice of the $N$ unitary generators and henceforth we
shall consider a generating set fixed. Since
$\nctorus_\theta$ is the twisted group $C^*$-algebra,
$C^*(\ZZ^N,\rho)$, the dual group $\torus^N$ of $\ZZ^N$ acts
pointwise continuously on $\nctorus_\theta$ by $\sigma_z(u^\alpha) := z^\alpha
u^\alpha = \prod_{k=1}^n z_k^{\alpha_k} u_k^{\alpha_k}$. The smooth non-commutative torus $\nctorus_\theta^\infty$
is defined as the sub-$*$-algebra of elements  $\nctorus_\theta$ for
which the map 
\begin{equation*}
\torus^N \ni z \mapsto \sigma_z (a) \in \nctorus_\theta
\end{equation*}
is smooth. The infinitesimal action of $\RR^N$, the Lie algebra
associated with
$\torus^N$, on $\nctorus_\theta^\infty$ is given by derivations
\begin{equation*}
\delta_k u_l = i \delta_{kl} u_l \quad (1 \leq k,l \leq N) .
\end{equation*}

\subsection{The Canonical Trace}
The non-commutative torus admits a unique normalized $\torus^N$-invariant
trace, namely 
\begin{equation*}
\tau : \sum_{\alpha \in \ZZ^N} c_\alpha u^\alpha \mapsto c_0 \, ,
\end{equation*}
and the map $u^\alpha \mapsto z^\alpha$ from $\nctorus_\theta$ to
$C(\torus^N)$ extends to an isomorphism between the Hilbert spaces
$L^2(\nctorus_\theta,\tau)$ and $L^2(\torus^N)$ 
when $\torus^N$ is equipped with the normalized Haar measure. This
isomorphism maps $\nctorus_\theta^\infty$ onto $C^\infty(\torus^N)$
such that the $\delta_k$'s are identified with the usual partial derivatives
on $\torus^N$.

\subsection{The Yang-Mills Functional}
For each finitely generated, projective (right) module $\module$ of
$\nctorus_\theta$ there exists a smooth version, an
$\nctorus_\theta^\infty$-module $\module^\infty$ unique up to
algebraic isomorphism, satisfying  
\begin{equation*}
\module^\infty \otimes_{\nctorus^\infty}^{\text{alg}} \nctorus_\theta \cong
\module \, .
\end{equation*}
A connection on $\module^\infty$ is a map from $\lie(\torus^N)$ to
$\End(\module^\infty)$, $x \mapsto \nabla_x$, implementing the action
of the $\delta_k$'s on $\nctorus_\theta$, i.e.,
\begin{equation*}
[\nabla_k, a] \xi = \delta_k(a) \cdot \xi \quad (\xi \in
\module^\infty) .
\end{equation*}
Since $\lie(\torus^N)$ is an abelian Lie algebra, the curvature of
$\nabla$, $\Theta_\nabla(x,y) := [\nabla_x,\nabla_y]$, is an element of
$(\nctorus_\theta^\infty)' \cap \End(\module^\infty)$.

We will always assume $\module$ to be equipped with a right $\nctorus_\theta$-valued 
inner product, $\rip{\cdot}{\cdot}_{\nctorus_\theta}$, and any connection on
$\module^\infty$ to be compatible 
with this, i.e., $\rip{\nabla_x \xi}{\eta}_{\nctorus_\theta} + \rip{\xi}{\nabla_x \eta}_{\nctorus_\theta} =
\delta_x \rip{\xi}{\eta}_{\nctorus_\theta}$.

The Yang-Mills functional on the space of connections on
$\module^\infty$ is given as follows. 
Firstly, we denote by $\op(\module)$ the adjointable operators on $\module$. Moreover, we denote by $\lip{\op(X)}{\cdot}{\cdot}$ the left $\op(\module)$-valued inner product on $\module$ given by $\lip{\op(\module)}{x}{y}(z) := x \cdot \rip{y}{z}_{\nctorus_\theta}$, and we denote by $\hat{\tau}$ the trace on the
adjointable operators satisfying 
$\hat{\tau}(\lip{\op(\module)}{x}{y})=\tau(\rip{y}{x}_{\nctorus_\theta})$, 
see \cite[Proposition 2.2]{Rieffel-Rotations}. Finally, choosing a basis $\{e_i\}$ for $\lie(\torus^N) = \RR^N$ and using $\Theta_{ij}$ as a convenient shorthand for $\Theta_\nabla(e_i,e_j)$, the Yang-Mills functional, $\YM$, is defined by
\begin{equation*}
\YM(\nabla) := \sum_{i<j} \hat{\tau}(\Theta_{ij}^* \Theta_{ij}).
\end{equation*}
If $\module^\infty$ admits flat connections, i.e., connections with scalar
curvature, then the Yang-Mills functional attains its minimum exactly
on these connections \cite[Theorem 2.1]{ConnesRieffel}. We denote the
collection of flat connections by
$\mc(\module^\infty)$. The group $\unitaries(\module^\infty)$ of unitary
operators, also called the gauge group, acts by 
conjugation on $\mc(\module^\infty)$,
\begin{equation*}
(\gamma_u (\nabla))_x \xi := u \cdot (\nabla_x (u^* \cdot \xi)),
\end{equation*}
and connections in the same orbit are said to be gauge equivalent. The
Yang-Mills functional $\YM$ is easily seen to be invariant under the
gauge group \cite{ConnesRieffel}. The quotient $\mc(\module^\infty) / 
\unitaries(\module^\infty)$ is called the moduli space of flat connections on
$\module^\infty$ and it is this space that we wish to determine.

\section{The Free Module Case}
We start out by considering connections on $\M_n(\nctorus_\theta)$ as a
right module over itself. Ultimately we will be able to reduce the
general case to this. 

Let $\trace$ be the normalized trace on the matrix algebra $\M_n(\CC)$ and denote by
$\hilbert_n$ the Hilbert space $L^2(\nctorus_\theta \otimes \M_n(\CC),
\tau \otimes \trace)$. The aforementioned isomorphism
$L^2(\nctorus_\theta,\tau) \cong L^2(\torus^N)$ extends to an
isomorphism $\hilbert_n \cong L^2(\torus^N) \otimes \M_n(\CC)$ when
$\M_n(\CC)$ is equipped with the inner product $\hip{x}{y} :=
\trace(y^*x)$. Denote by $\eta$ the canonical embedding $\eta :
\M_n(\nctorus_\theta) \rightarrow \hilbert_n$ and define $\hilbert_n^\infty
:= \eta(\M_n(\nctorus_\theta^\infty))$. Define on $\hilbert_n^\infty$ the
derivations $D_1,\ldots,D_N$ by $D_k \eta(x) := \eta(\delta_k
x)$. These are the usual derivations on $C^\infty(\torus^N) \otimes
\M_n(\CC)$ and each $D_k$ may be extended to its usual domain 
\begin{equation*}
\domain(D_k) = \spam\left\{f \otimes m \in L^2(\torus^N) \otimes \M_n(\CC) \, \big|\,
  (\alpha_k \fourier(f)(\alpha))_{\alpha \in \ZZ^N} \in \ell^2(\ZZ^N)
\right\} ,
\end{equation*}
where $\fourier$ denotes the Fourier transform from $L^2(\torus^N)$ to $\ell^2(\ZZ^N)$. On this domain $D_k$
is known to be skew-adjoint, i.e., $iD_k$ is self-adjoint in the sense
of unbounded operators. Obviously, the 
connection $D = (D_k)_k$ has curvature zero. 

Using the connection $D=(D_1,\ldots,D_N)$ we define the analogue of
the standard Laplacian.
\begin{definition}[Laplacian]
\index{Laplacian on $\hilbert_n$}
The \emph{Laplacian} on $\hilbert_n$ is the operator $\Delta := - \sum_k
D_k^2$. 
\end{definition}

By the usual rules for addition and composition of unbounded
operators, the domain of the Laplacian is $\domain(\Delta) = \bigcap_k
( \domain(D_k) \cap D_k^{-1} \domain(D_k))$, i.e., 
\begin{equation*}
\domain(\Delta) = \spam \left\{ f \otimes m \mid \prt{\norm{\alpha}_2^2
    \fourier(f)(\alpha)}_{\alpha \in \ZZ^N} \in \ell^2\left(\ZZ^N\right)
\right\}. 
\end{equation*}
Written in terms of $\sum c_\alpha u^\alpha \otimes m$ the domain of
$\Delta$ is thus
\begin{equation} \label{Equation: Domain of Delta}
\domain(\Delta) = \spam \left\{ \eta \prt{ \sum_{\alpha \in \ZZ^N} c_\alpha
    u^\alpha \otimes m } \mid \prt{ \|\alpha\|_2^2 c_\alpha }_{\alpha \in \ZZ^N}
  \in \ell^2 \prt{ \ZZ^N } \right\} .
\end{equation}
Note that as $\|\alpha\|_2 \leq \|\alpha\|_1 \leq N^{1/2} \|\alpha\|_2$,
the $\|\alpha\|_2^2$ appearing in (\ref{Equation: Domain of Delta})
may be replaced with $\|\alpha\|_1^2$.
It is well-known that $\Delta$ is a positive self-adjoint operator.

It turns out to be useful to have a symbol for the left module action
of $\M_n(\nctorus_\theta)$ on $\hilbert_n$:

\begin{definition}
Let $\pi : \M_n(\nctorus_\theta) \rightarrow \bounded(\hilbert_n)$ be the map given by $\pi(a) \eta(x) := \eta(ax)$ for $a,x \in
\M_n(\nctorus_\theta)$. 
\end{definition}
Note that $\pi(a)$ is bounded as claimed as 
\begin{equation*}
\norm{\pi(a)\eta(x)}_{\tau_\theta \otimes \trace}^2 = \tau_\theta
\otimes \trace \prt{ x^*a^*ax } \leq \norm{a^*a} \norm{x}_{\tau_\theta
  \otimes \trace}^2 .
\end{equation*}

We now turn to the case where an arbitrary connection $\nabla$ on $\hilbert_n$
is given.

\begin{lemma}
For every connection $\nabla$ on $\hilbert_n^\infty$ there exists
unique skew-adjoint elements $h_k \in \M_n(\nctorus_\theta^\infty)$,
$k=1,\ldots,N$, such that $\nabla_k = D_k + \pi(h_k)$ on
$\hilbert_n^\infty$ for $k=1,\ldots,N$.
\end{lemma}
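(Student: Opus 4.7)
The plan is to define $h_k$ by evaluating the connection at the unit element and then exploit the fact that any two connections differ by a right-module endomorphism. Specifically, using the identification $\hilbert_n^\infty = \eta(\M_n(\nctorus_\theta^\infty))$, set $h_k := \eta^{-1}(\nabla_k \eta(\zunit))$, where $\zunit$ denotes the identity of $\M_n(\nctorus_\theta^\infty)$. Since $\nabla_k$ maps $\hilbert_n^\infty$ into itself, this definition lands in $\M_n(\nctorus_\theta^\infty)$, so the potential regularity issue (that $h_k$ is smooth) is settled automatically.

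Next I would verify that $\nabla_k - D_k$ is right $\M_n(\nctorus_\theta^\infty)$-linear. Both $\nabla_k$ and $D_k$ satisfy the Leibniz rule $[\nabla_k,a] = \delta_k(a)$, hence their difference satisfies $(\nabla_k - D_k)(\xi \cdot a) = (\nabla_k - D_k)(\xi) \cdot a$ for $a \in \M_n(\nctorus_\theta^\infty)$. Writing an arbitrary $\xi \in \hilbert_n^\infty$ as $\eta(\zunit) \cdot a$ with $a = \eta^{-1}(\xi)$, right-linearity forces
\begin{equation*}
(\nabla_k - D_k)(\xi) = (\nabla_k - D_k)(\eta(\zunit)) \cdot a = \eta(h_k) \cdot a = \pi(h_k)(\xi),
\end{equation*}
which gives the decomposition $\nabla_k = D_k + \pi(h_k)$.

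Uniqueness is immediate: if $\pi(h) = \pi(h')$ on $\hilbert_n^\infty$, then evaluating at $\eta(\zunit)$ yields $\eta(h) = \eta(h')$ and hence $h = h'$. For skew-adjointness I would use the fact that $\nabla_k$ and $D_k$ are both compatible with the $\nctorus_\theta$-valued inner product, so their difference $\pi(h_k)$ satisfies $\rip{\pi(h_k)\xi}{\eta}_{\nctorus_\theta} + \rip{\xi}{\pi(h_k)\eta}_{\nctorus_\theta} = 0$ for all $\xi, \eta \in \hilbert_n^\infty$. Taking $\xi = \eta = \eta(\zunit)$ (and recalling that $\pi(a)^* = \pi(a^*)$, which follows from the inner product formula $\rip{x}{y}_{\nctorus_\theta} = x^*y$ on the free module) yields $h_k^* + h_k = 0$.

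No single step is a serious obstacle; the only point that requires a little care is checking that $h_k$ is genuinely smooth, and this is already built into the hypothesis that $\nabla$ is a connection on the smooth module $\hilbert_n^\infty$, so that $\nabla_k \eta(\zunit)$ lies in $\hilbert_n^\infty = \eta(\M_n(\nctorus_\theta^\infty))$ by definition.
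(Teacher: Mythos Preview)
Your proof is correct and essentially identical to the paper's: both define $h_k$ via $\eta(h_k) = \nabla_k\eta(\zunit)$, use the Leibniz rule (you phrase it as right-linearity of $\nabla_k - D_k$, the paper expands $\nabla_k(\eta(\zunit)\cdot a)$ directly) to obtain $\nabla_k - D_k = \pi(h_k)$, and deduce skew-adjointness from compatibility of $\nabla_k$ and $D_k$ with the inner product. The only cosmetic difference is that the paper cites faithfulness of $\tau_\theta$ for uniqueness while you evaluate at $\eta(\zunit)$; both are immediate.
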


\begin{proof}
For any $\eta(a) \in \hilbert_n^\infty$ we have 
\begin{equation*}
\nabla_k(\eta(a)) = \nabla_k(\eta(\unit) \cdot a) = (\nabla_k
\eta(\unit)) \cdot a + D_k \eta(a).
\end{equation*}
Choosing $h_k \in \M_n(\nctorus_\theta^\infty)$ such that $\eta(h_k) = \nabla_k
\eta(\unit)$ it follows that $\eta(h_k) \cdot a = \eta(h_k a) =
\pi(h_k) \eta(a)$, wherefore $\nabla_k - D_k =
\pi(h_k)$. Uniqueness of $h_k$ follows from the faithfulness of
$\tau_\theta$ and skew-adjointness follows from the skew-symmetry of
$\nabla_k$ and $D_k$.
\end{proof}

By boundedness of $\pi(h_k)$ we obtain the following corollary.

\begin{corollary}
For every connection $\nabla$ on $\hilbert_n^\infty$, the derivations
$\nabla_1,\ldots,\nabla_N$ may be extended to skew-adjoint operators on the domains
$\domain(D_1),\ldots,\domain(D_N)$, respectively.
\end{corollary}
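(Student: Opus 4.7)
The plan is to combine the previous lemma with the standard fact that a bounded symmetric perturbation of a self-adjoint operator remains self-adjoint on the same domain.

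By the preceding lemma, we may write $\nabla_k = D_k + \pi(h_k)$ on $\hilbert_n^\infty$, where $h_k \in \M_n(\nctorus_\theta^\infty)$ is skew-adjoint. The bounded operator $\pi(h_k)$ is defined on all of $\hilbert_n$, and an immediate computation using the defining property of $\pi$ together with the trace identity $\tau_\theta \otimes \trace(y^* h_k x) = -\tau_\theta \otimes \trace((h_k y)^* x)$ shows $\pi(h_k)^* = \pi(h_k^*) = -\pi(h_k)$, so $i\pi(h_k)$ is a bounded self-adjoint operator on $\hilbert_n$.

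Now $iD_k$ is self-adjoint on $\domain(D_k)$ by the remark preceding Definition~2.1 (the usual partial derivative on $L^2(\torus^N) \otimes \M_n(\CC)$), so by the Kato--Rellich theorem in its trivial form for bounded perturbations, the operator
\begin{equation*}
i\nabla_k := iD_k + i\pi(h_k)
\end{equation*}
defined on $\domain(D_k)$ is again self-adjoint. Hence $\nabla_k$ extends to a skew-adjoint operator on $\domain(D_k)$, which is the required statement.

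The only point needing care is verifying that extending $\nabla_k$ from $\hilbert_n^\infty$ to $\domain(D_k)$ in this way really produces an extension of the connection as originally given: this is automatic because on the dense subspace $\hilbert_n^\infty \subset \domain(D_k)$ the two definitions agree by construction, and the extension is then unique as the closure of $D_k + \pi(h_k)$ on that core. No serious obstacle arises.
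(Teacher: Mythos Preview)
Your proof is correct and follows the same idea as the paper, which dispatches the corollary in a single sentence (``By boundedness of $\pi(h_k)$ we obtain the following corollary''). You have simply made explicit what the paper leaves implicit: that a bounded skew-adjoint perturbation of a skew-adjoint operator is again skew-adjoint on the same domain, which is exactly the trivial case of Kato--Rellich.
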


For the remainder of this section we fix a connection $\nabla$ and
the corresponding skew-adjoint elements $h_1,\ldots,h_N \in
\M_n(\nctorus_\theta^\infty)$ that satisfy $\nabla_k = D_k + \pi(h_k)$. 
 
\begin{lemma}
\label{Lemma: Small Wonder}
If $h \in \M_n(\nctorus_\theta^\infty)$ then $\pi(h)$ maps
$\domain(\Delta)$ into $\domain(\Delta)$.
\end{lemma}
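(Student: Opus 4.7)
The plan is to show that $\pi(h)$ in fact preserves each $\domain(D_k^2)$ and hence their intersection, which coincides with $\domain(\Delta) = \bigcap_k \{ x \in \domain(D_k) : D_k x \in \domain(D_k) \}$.

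First I would establish the Leibniz-type identity
\begin{equation*}
D_k \, \pi(h) \, \eta(a) = \pi(\delta_k h) \, \eta(a) + \pi(h) \, D_k \, \eta(a)
\end{equation*}
for $a \in \M_n(\nctorus_\theta^\infty)$. This is immediate from the definition of $D_k$ and the fact that $\delta_k$ is a derivation on $\M_n(\nctorus_\theta^\infty)$. Next, I would extend this identity from $\hilbert_n^\infty$ to the full domain $\domain(D_k)$. Given $x \in \domain(D_k)$, pick a sequence $\eta(a_m) \in \hilbert_n^\infty$ with $\eta(a_m) \to x$ and $D_k \eta(a_m) \to D_k x$ in norm (for example by Fourier truncation). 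Since $\pi(h)$ and $\pi(\delta_k h)$ are bounded, both $\pi(h) \eta(a_m) \to \pi(h) x$ and the right-hand side of the Leibniz identity converges to $\pi(\delta_k h) x + \pi(h) D_k x$. Closedness of $D_k$ then yields $\pi(h) x \in \domain(D_k)$ together with
\begin{equation*}
D_k \, \pi(h) \, x = \pi(\delta_k h) \, x + \pi(h) \, D_k \, x .
\end{equation*}

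Now suppose $x \in \domain(\Delta)$, so that $x$ and $D_k x$ both lie in $\domain(D_k)$ for every $k$. By the previous step $\pi(h) x \in \domain(D_k)$, and its image under $D_k$ is $\pi(\delta_k h) x + \pi(h) D_k x$. Since $\delta_k h$ also belongs to $\M_n(\nctorus_\theta^\infty)$, the previous step applies equally to $\pi(\delta_k h)$ and shows that $\pi(\delta_k h) x \in \domain(D_k)$; similarly $\pi(h) D_k x \in \domain(D_k)$ because $D_k x \in \domain(D_k)$. Hence $D_k \pi(h) x \in \domain(D_k)$, i.e. $\pi(h) x \in \domain(D_k^2)$. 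As this holds for every $k$, $\pi(h) x$ lies in $\domain(\Delta)$.

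The only real subtlety is the extension of the Leibniz rule beyond the smooth subspace, which rests on the closedness of each $D_k$ together with the boundedness of $\pi(h)$ and $\pi(\delta_k h)$; once that is in place, the argument is a one-step bootstrap using smoothness of $h$ and the closure of $\M_n(\nctorus_\theta^\infty)$ under the derivations $\delta_k$.
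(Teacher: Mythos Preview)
Your argument is correct and takes a genuinely different route from the paper's proof. The paper works directly in Fourier coordinates: writing $h$ and a generic $f\in\domain(\Delta)$ via their Fourier coefficients, it bounds $\|\alpha\|_2^2$ by a combination of $\|\alpha-\beta\|_2^2$, $\|\alpha-\beta\|_2\|\beta\|_2$ and $\|\beta\|_2^2$, and then estimates $\|\alpha\|_2^2|\widehat{h}\!\times\!\widehat f(\alpha)|$ termwise as a sum of three convolutions, using that $\widehat h$ is rapidly decreasing. Your approach is purely operator-theoretic: you establish the Leibniz identity $D_k\pi(h)=\pi(\delta_k h)+\pi(h)D_k$ on the smooth core, extend it to all of $\domain(D_k)$ via closedness of $D_k$ and boundedness of $\pi(h)$, $\pi(\delta_k h)$, and then bootstrap once to reach $\domain(D_k^2)$. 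The paper's computation is tailored to the explicit Fourier description of $\domain(\Delta)$, whereas your argument only uses that $\hilbert_n^\infty$ is a core for each $D_k$, that $D_k$ is closed, and that $\M_n(\nctorus_\theta^\infty)$ is stable under $\delta_k$; it would carry over verbatim to any similar setting and also iterates immediately to show $\pi(h)$ preserves $\domain(\Delta^m)$ for every $m$. The only point worth spelling out a bit more is that the Fourier truncations you invoke indeed converge in the graph norm of $D_k$, giving the core property, but this is standard.
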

\begin{proof}
Let $h \otimes m_1$ be an element of $\M_n(\nctorus_\theta^\infty) \subseteq L^2(\nctorus_\theta) \otimes \M_n(\CC) \cong L^2(\torus^N) \otimes \M_n(\CC)$ and let $f \otimes m_2$ be an element of $\domain(\Delta) \subseteq L^2(\nctorus_\theta) \otimes \M_n(\CC) \cong L^2(\torus^N) \otimes \M_n(\CC)$. Denoting by $\hat h$ and $\hat f$ denote the Fourier transform of $h$ and $f$, respectively, we note that $(\|\alpha\|_2^n \hat{h}(\alpha))_\alpha$ and $(\|\alpha\|_2^2 \hat{f}(\alpha))_\alpha$ belong to $\ell^2(\ZZ^N)$ for any $n \in \NN$. To prove the lemma we must show that $(\|\alpha\|_2^2 |\hat h \times \hat f (\alpha)|)_\alpha$ is likewise in $\ell^2(\ZZ^N)$.

We first note the following inequality for $\alpha, \beta \in \ZZ^N$:
\begin{equation*}
\|\alpha\|_2^2 \leq \|\alpha - \beta\|_2^2 + 2 \|\alpha\|_2 \|\beta\|_2 + \|\beta\|_2^2 \leq \|\alpha - \beta\|_2^2 + 2 \|\alpha - \beta\|_2 \|\beta\|_2 + 2 \|\beta\|_2^2 .
\end{equation*}
A direct computation then yields the lemma:
\begin{equation*}
\begin{split}
\|\alpha\|_2^2 |\hat h \times \hat f (\alpha)| & = \|\alpha\|_2^2 \sum_\beta |\hat h (\beta)| |\hat f (\alpha - \beta)|
\\
& \leq \sum_\beta |\hat h (\beta)| (\|\alpha - \beta\|_2^2 |\hat f (\alpha - \beta)|) + 2 \sum_\beta (\|\alpha\|_2 |\hat h (\alpha)|) (\|\alpha - \beta\|_2 |\hat f (\alpha - \beta)|) 
\\
& \quad
+ 2 \sum_\beta (\|\beta\|_2^2 |\hat h (\beta)|) |\hat f (\alpha - \beta)| .
\end{split}
\end{equation*}
As the convolution product of two $\ell^2$-functions is itself a function in $\ell^2$, the above expression is finite.
\end{proof}

As a consequence of the previous lemma $\sum (D_k + \pi(h_k))^2$ is well-defined
on $\domain(\Delta)$.

\begin{definition}
We denote by $H$ the operator $-\sum_k (D_k + \pi(h_k))^2$ with
domain $\domain(H) := \domain(\Delta)$.
\end{definition}

Like the Laplacian, $\Delta$, the operator $H$ is self-adjoint.

\begin{proposition} \label{Proposition: H Self-Adjoint}

The unbounded operator $H$ is self-adjoint.
\end{proposition}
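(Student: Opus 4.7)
The plan is to display $H$ as an infinitesimally $\Delta$-bounded symmetric perturbation of the self-adjoint Laplacian and apply the Kato-Rellich theorem. Expanding the square and invoking Lemma~\ref{Lemma: Small Wonder} to guarantee that $\pi(h_k)$ preserves $\domain(\Delta)$, the commutator identity $D_k \pi(h_k) = \pi(h_k) D_k + \pi(\delta_k h_k)$---verified on $\hilbert_n^\infty$ by a direct Fourier-coefficient calculation and extended to $\domain(\Delta)$ by density in the graph norm---gives
\begin{equation*}
H = \Delta + B, \qquad B := -2 \sum_k \pi(h_k) D_k - \sum_k \pi(\delta_k h_k) - \sum_k \pi(h_k)^2,
\end{equation*}
with $B$ defined on all of $\domain(\Delta)$.

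The first substantive step is to check that $B$ is symmetric on $\domain(\Delta)$. Since each $h_k$ is skew-adjoint and $\delta_k$ commutes with the $*$-operation (immediate from $\delta_k u_l = i\delta_{kl} u_l$), each $\delta_k h_k$ is skew-adjoint, so $\pi(\delta_k h_k)^* = -\pi(\delta_k h_k)$, while $\pi(h_k)^2$ is self-adjoint. Combining these facts with $(\pi(h_k) D_k)^* \supseteq D_k \pi(h_k) = \pi(h_k) D_k + \pi(\delta_k h_k)$ and summing over $k$ produces $B^* \supseteq B$ on $\domain(\Delta)$, which is the desired symmetry.

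The second step is to show $B$ has relative $\Delta$-bound zero. The zero-order summands are bounded, so only the first-order terms $\pi(h_k) D_k$ need attention; since $\pi(h_k)$ is bounded it suffices to estimate $D_k$ by $\Delta$. Using skew-adjointness of $D_k$ together with the form inequality $-D_k^2 \leq \Delta$ on $\domain(\Delta)$ and Cauchy-Schwarz,
\begin{equation*}
\|D_k \xi\|^2 = \langle -D_k^2 \xi, \xi \rangle \leq \langle \Delta \xi, \xi \rangle \leq \|\Delta \xi\| \, \|\xi\|,
\end{equation*}
so the elementary inequality $\sqrt{ab} \leq \varepsilon a + (4\varepsilon)^{-1} b$ yields $\|D_k \xi\| \leq \varepsilon \|\Delta \xi\| + (4\varepsilon)^{-1} \|\xi\|$ for every $\varepsilon > 0$. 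Hence $B$ is infinitesimally $\Delta$-bounded, and the Kato-Rellich theorem gives self-adjointness of $H = \Delta + B$ on $\domain(\Delta)$. The main obstacle here is really just the domain bookkeeping behind the commutator manipulations, which is precisely what Lemma~\ref{Lemma: Small Wonder} has been set up to handle.
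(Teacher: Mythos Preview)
Your proof is correct and takes a genuinely different route from the paper's. Both arguments begin with the same decomposition $H=\Delta+B$ where $B$ is a first-order operator with smooth coefficients, but thereafter they diverge. The paper proceeds by hand: assuming $f\in\domain(H^*)$, it pairs $H\phi$ against $f$ for arbitrary $\phi\in\domain(\Delta)$, passes to Fourier coefficients, and reads off directly that $(\|\alpha\|_2^2\hat f(\alpha))_\alpha\in\ell^2(\ZZ^N)$, i.e.\ $f\in\domain(\Delta)$. Your argument instead packages the same information into the abstract Kato--Rellich theorem: the key estimate $\|D_k\xi\|^2\le\|\Delta\xi\|\,\|\xi\|$ gives infinitesimal $\Delta$-boundedness of the first-order part of $B$, and the symmetry check (using $\delta_k(h_k)^*=-\delta_k(h_k)$ and the commutator identity) is exactly what is needed for the hypotheses. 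What your approach buys is conceptual transparency and portability---it is the standard mechanism for Schr\"odinger-type operators and makes clear that nothing beyond ``first-order perturbation of an elliptic operator'' is happening---while the paper's explicit Fourier argument is more self-contained and avoids invoking an external theorem. Either way Lemma~\ref{Lemma: Small Wonder} does the essential domain bookkeeping.
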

\begin{proof}
First note that $H$ can be written as
\begin{equation}
\label{Equation: H = Delta - V}
\begin{split}
H & = - \sum_k (\partial_k + \pi(h_k))^2 = \Delta - 2 \sum_k \pi(h_k) \partial_k - \sum_k \pi(\partial_k h_k) - \pi(\sum_k h_k^2)
\\
& = \Delta - 2 \sum_k \pi(h_k) \partial_k - \pi(h),
\end{split}
\end{equation}
where $h := \sum_k \partial_k h_k + h_k^2$ belongs to $\M_n(\nctorus_\theta^\infty)$. For simplicity of notation we will assume that $n=1$ in the following. The case $n>1$ follows from similar argumentation.

As $H$ is symmetric it is enough to show that $\domain(H^*) \subseteq \domain(H)$. Assume that $f \in \domain(H^*)$ and let $g = H^* f$. The aim is to show that $f$ belongs to $\domain(H)$, i.e., that $(\|\alpha\|_2^2 \hat f(\alpha) )_\alpha$ belongs to $\ell^2(\ZZ^N)$. For that purpose let $\phi$ be an arbitrary element of $\domain(\Delta)$. Then
\begin{equation*}
\ip{\phi}{g} = \ip{H\phi}{f} = \ip{\Delta \phi}{f} - 2 \sum_k \ip{\partial_k \phi}{\pi(h_k^*) f} - \ip{\phi}{\pi(h^*)f} .
\end{equation*}
Let $f_k =\pi(h_k^*) f$ and $f_0 = \pi(h^*) f$ and identify for ease of notation each involved function with its Fourier transform. Then the above equation can be rewritten as follows.
\begin{equation*}
\sum_\alpha \phi(\alpha) \overline{g(\alpha)} = \sum_\alpha \|\alpha\|_2^2 \phi(\alpha) \overline{f(\alpha)} - 2 \sum_{\alpha, k} \alpha_k \phi(\alpha) \overline{f_k(\alpha)} - \sum_\alpha \phi(\alpha) \overline{f_0(\alpha)}.
\end{equation*}
Rearranging the equation we get:
\begin{equation*}
\sum_\alpha \phi(\alpha) \prt{\|\alpha\|_2^2 f(\alpha) - g(\alpha) - 2 \sum_k \alpha_k f_k(\alpha) - f_0(\alpha) }^- = 0.
\end{equation*}
Using Lemma \ref{Lemma: Small Wonder} and the fact that $\domain(\Delta)$ is dense in $\hilbert_n$, we conclude that $(\|\alpha\|_2^2 f(\alpha))_\alpha$ is in $\ell^2(\ZZ^N)$ as desired.

\end{proof}

\begin{lemma}
\label{Lemma: H-Eigenvectors Are Smooth}
All eigenvectors of $H$ are smooth, i.e., if $H \xi = \lambda \xi$ then $\xi$ belongs to $\domain^\infty(\Delta)$.
\end{lemma}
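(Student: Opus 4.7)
The plan is to perform a standard elliptic bootstrap in a Sobolev scale associated with $\Delta$. For $s \geq 0$, let $\hilbertK^s$ denote the space of $\xi \in \hilbert_n$ whose Fourier coefficients satisfy $(\|\alpha\|_2^s \hat\xi(\alpha))_\alpha \in \ell^2(\ZZ^N)$. Then $\hilbertK^{2k} = \domain(\Delta^k)$ for $k \in \NN$, and $\domain^\infty(\Delta) = \bigcap_{s \geq 0} \hilbertK^s$; my goal is to promote $\xi$ from $\hilbertK^2$ into every $\hilbertK^s$.

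The starting point is the rewriting in (\ref{Equation: H = Delta - V}): whenever $H\xi = \lambda \xi$, one has
\begin{equation*}
\Delta \xi = \lambda \xi + 2 \sum_k \pi(h_k) D_k \xi + \pi(h) \xi .
\end{equation*}
Two regularity facts will feed the bootstrap. First, each $D_k$ sends $\hilbertK^s$ into $\hilbertK^{s-1}$, which is immediate since on the Fourier side $D_k$ acts as multiplication by $i\alpha_k$. Second, for each $g \in \M_n(\nctorus_\theta^\infty)$ the operator $\pi(g)$ preserves $\hilbertK^s$ for every $s \geq 0$. This is the direct generalization of Lemma \ref{Lemma: Small Wonder}: its proof goes through with the exponent $2$ replaced by $s$, using an inequality of the form $\|\alpha\|_2^s \leq 2^s (\|\alpha - \beta\|_2^s + \|\beta\|_2^s)$ and the fact that the Fourier coefficients of $g$ are rapidly decaying, so $(\|\beta\|_2^t \hat g(\beta))_\beta \in \ell^1(\ZZ^N)$ for every $t \geq 0$. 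The resulting convolution is then bounded as an $\ell^1 \ast \ell^2$ product.

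With these in hand, the bootstrap is immediate. Suppose $\xi \in \hilbertK^s$ for some $s \geq 2$. Then $D_k \xi \in \hilbertK^{s-1}$, hence $\pi(h_k) D_k \xi \in \hilbertK^{s-1}$, while $\lambda \xi$ and $\pi(h) \xi$ lie in $\hilbertK^s \subseteq \hilbertK^{s-1}$. Therefore the right-hand side above lies in $\hilbertK^{s-1}$, i.e.\ $\Delta \xi \in \hilbertK^{s-1}$, which by the definition of the scale forces $\xi \in \hilbertK^{s+1}$. Since by hypothesis $\xi \in \domain(H) = \domain(\Delta) = \hilbertK^2$, iterating yields $\xi \in \hilbertK^s$ for every $s$, hence $\xi \in \domain^\infty(\Delta)$.

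The only mildly non-routine point is the upgrade of Lemma \ref{Lemma: Small Wonder} to arbitrary Sobolev exponent; but this is a line-for-line rerun of its proof, so I foresee no real obstacle.
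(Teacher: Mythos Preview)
Your argument is correct and rests on the same idea as the paper's proof: an elliptic bootstrap exploiting that $H-\Delta$ is first order with smooth coefficients. The packaging, however, differs. The paper writes $H=\Delta+V$ with $V$ first order, observes that $[V,H]$ is again first order, and expands $\Delta^n=(H-V)^n$ to obtain $\Delta^n = H^n + VH^{n-1} + (\text{order }2n-2)$; applied to an $H$-eigenvector this shows that $\xi\in\domain(\Delta^{n-1})$ forces $\xi\in\domain(\Delta^n)$, and induction on $n$ finishes. Your version introduces the Sobolev scale $\hilbertK^s$ explicitly and climbs one index at a time from the single identity $\Delta\xi=\lambda\xi+2\sum_k\pi(h_k)D_k\xi+\pi(h)\xi$. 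This is a bit more elementary, since it sidesteps the commutator bookkeeping in the expansion of $(H-V)^n$; the paper's route gains two Sobolev indices per step but pays for it with that algebra. Both rely on the same substantive input---that $\pi(g)$ preserves each Sobolev level for smooth $g$---and your remark that this is a straightforward extension of Lemma~\ref{Lemma: Small Wonder} (using $\|\alpha\|_2^s\leq 2^s(\|\alpha-\beta\|_2^s+\|\beta\|_2^s)$ and rapid decay of $\hat g$) is accurate.
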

\begin{proof}
As in the previous proof, see Eq. (\ref{Equation: H = Delta - V}), we can write $H = \Delta + V$ where $V$ is a first order differential operator. Note that $[V,H]$ is likewise of first order, that is $HV = VH + (\text{first order})$. Thus,
\begin{equation*}
\Delta^n = (H - V)^n = H^n + \sum_{k=1}^n H^{n-k}VH^{k-1} + (\text{order $2n-2$}) .
\end{equation*}
Hence, 
\begin{equation}
\label{Equation: Deltan}
\Delta^n = H^n + VH^{n-1} + (\text{order $2n-2$}).
\end{equation}
Now, assuming that $\xi$ is an eigenvector for $H$ then equation (\ref{Equation: Deltan}) implies that if $\xi \in \domain(\Delta^{n-1})$ (i.e., $\xi$ is $2n-2$ times differentiable by regularity of $\Delta$) then $\xi \in \domain(\Delta^n)$.

The lemma now follows by induction.
\end{proof}

The operator $H$ has much in common with the Laplacian $\Delta$. In
fact, the compactness of the resolvent of $\Delta$ implies by the
variational formulae compactness of the resolvent of $H$:

\begin{lemma}
The resolvent of $H$ is compact.
\end{lemma}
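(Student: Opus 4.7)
The plan is to compare $H$ with $\Delta$ via the min-max principle, exploiting that both operators are self-adjoint on the common dense domain $\domain(\Delta)$. Since $\Delta$ has compact resolvent, it suffices to show that the min-max eigenvalue sequence $\mu_n(H)$ tends to infinity.

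Starting from the decomposition $H = \Delta + V$ given in (\ref{Equation: H = Delta - V}), where $V = -2\sum_k \pi(h_k) D_k - \pi(h)$ has coefficients in $\M_n(\nctorus_\theta^\infty)$, I would first establish a relative form bound of the shape
\begin{equation*}
|\ip{V\xi}{\xi}| \leq \epsilon \, \ip{\Delta\xi}{\xi} + C_\epsilon \norm{\xi}^2, \qquad \xi \in \domain(\Delta),
\end{equation*}
valid for every $\epsilon > 0$. The two ingredients are the boundedness of $\pi(h_k)$ and $\pi(h)$, together with the elementary inequality $\norm{D_k\xi}^2 \leq \ip{\Delta\xi}{\xi}$, which holds because $\Delta = \sum_k D_k^* D_k$ (as each $D_k$ is skew-adjoint). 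The cross terms $\ip{\pi(h_k) D_k\xi}{\xi}$ are then controlled by Cauchy--Schwarz combined with the Young-type splitting $|ab| \leq \epsilon a^2 + (4\epsilon)^{-1} b^2$, which allows the derivative factor to be absorbed into a small multiple of $\ip{\Delta\xi}{\xi}$.

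Choosing $\epsilon = 1/2$ and using that both $\ip{H\xi}{\xi}$ and $\ip{\Delta\xi}{\xi}$ are real (by self-adjointness), the form bound rearranges to
\begin{equation*}
\ip{H\xi}{\xi} \geq \tfrac{1}{2} \ip{\Delta\xi}{\xi} - M \norm{\xi}^2
\end{equation*}
for some constant $M > 0$, showing in particular that $H$ is bounded below. The min-max principle, applied on the common form domain, then gives $\mu_n(H) \geq \tfrac{1}{2} \mu_n(\Delta) - M$ for every $n \in \NN$. Since $\Delta$ has compact resolvent, $\mu_n(\Delta) \to \infty$, hence the same is true for $\mu_n(H)$, which is equivalent to the resolvent of $H$ being compact.

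The main technical point is the relative form bound: $V$ is unbounded because of the first-order term $\sum_k \pi(h_k) D_k$, so it cannot be dismissed as a bounded perturbation. The observation making the argument go through is that, in the quadratic-form sense, $D_k$ is dominated by $\Delta^{1/2}$, so that an arbitrarily small fraction of the first-order part can be absorbed into $\Delta$ at the price of an additive constant multiple of $\norm{\xi}^2$. Once this is in place, the min-max step is entirely routine.
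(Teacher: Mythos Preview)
Your argument is correct and shares the same skeleton as the paper's: compare the quadratic forms of $H$ and $\Delta$ on the common domain $\domain(\Delta)$, deduce an inequality of the type $\ip{H\xi}{\xi} \geq c\,\ip{\Delta\xi}{\xi} - M\norm{\xi}^2$, and then invoke the min--max principle to transfer $\lambda_k^\Delta \nearrow \infty$ to $\lambda_k^H \nearrow \infty$.

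The difference lies in how that form inequality is obtained. You go through the decomposition $H = \Delta + V$ and establish a relative form bound for the first-order perturbation $V$ using Cauchy--Schwarz and a Young-type splitting. The paper instead exploits directly that $H = -\sum_k (D_k + \pi(h_k))^2$ is a sum of squares, so that $\ip{H\xi}{\xi} = \sum_k \norm{\nabla_k\xi}^2$; writing $D_k\xi = \nabla_k\xi - \pi(h_k)\xi$ and using $\norm{a-b}^2 \leq 2\norm{a}^2 + 2\norm{b}^2$ immediately gives $\ip{\Delta\xi}{\xi} \leq 2\ip{H\xi}{\xi} + 2\sum_k\norm{\pi(h_k)}^2\norm{\xi}^2$. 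This is a one-line derivation that avoids the $\epsilon$-splitting entirely. Your route is the standard Kato--Rellich style argument and would apply to more general perturbations of $\Delta$; the paper's route is shorter because it uses the specific structure of $H$.
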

\begin{proof}
For a positive, self-adjoint operator $T$ and any finite-dimensional
subspace $L$ of $\domain(T)$ we define
\begin{equation*}
\lambda(L) := \sup \left\{ \hip{Tf}{f} \mid f \in L , \, \|f\|=1
\right\}.
\end{equation*}
These are then used to define for each $k \in \NN$:
\begin{equation*}
\lambda_k := \inf \left\{ \lambda(L) \mid \text{$L$ $k$-dimensional
    subspace of $\domain(T)$} \right\}.
\end{equation*}
We denote by $\lambda_k^H$ and $\lambda_k^\Delta$ the corresponding
numbers for the positive self-adjoint operators $H$ and $\Delta$.

As $\Delta$ has compact resolvent it follows by \cite[Theorem
4.5.1]{Davies} that $\lambda_k^\Delta \nearrow \infty$. 
The inequality
\begin{equation*}
\begin{split}
\ip{\Delta \xi}{\xi} & = \sum_k \|D_k \xi\|^2 = \sum_k \| (\nabla_k - \pi(h_k))\xi\|^2 \leq 2 \sum_k \|\nabla_k \xi\|^2 + 2 \sum_k \|\pi(h_k) \xi\|^2 
\\
& \leq 2 \ip{H \xi}{\xi} + 2 \sum_k \ip{\pi(h_k^* h_k) \xi}{\xi}
\end{split}
\end{equation*}
shows that $\Delta \leq C (H+1)$ for some $C \geq 0$, and so in particular $\hip{\Delta f}{f} \leq
C \hip{Hf}{f} + C$ for $\|f\|=1$, whence we conclude that
$\lambda_k^\Delta \leq C (\lambda_k^H + 1)$. Since $\lambda_k^\Delta
\nearrow \infty$ it follows that $\lambda_k^H \nearrow \infty$. By
\cite[Theorem 4.5.2]{Davies} this implies that $H$ has empty essential
spectrum which by \cite[Corollary 4.2.3]{Davies} further implies that
$H$ has compact resolvent as desired.
\end{proof}

\begin{proposition} \label{Proposition: Connection up to Gauge Equivalence}
Suppose that $\nabla$ has constant curvature. Then there exists a unitary $u \in
\M_n(\nctorus_\theta^\infty)$ and a commuting family of skew-adjoint
elements $\Lambda_k \in \M_n(\CC)$, $k=1,\ldots,N$,  such that 
\begin{equation} \label{Equation: Connections Almost Classified}
\pi(u^*) \nabla_k \pi(u) = D_k + \pi(\Lambda_k) , \quad k=1,\ldots,N
\, , 
\end{equation}
when restricted to the domain $\hilbert_n^\infty$.
\end{proposition}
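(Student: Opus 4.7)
\medskip

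\noindent\emph{Proof sketch.} The plan is to use the spectral theory of $H$ developed in the preceding lemmas — compact resolvent together with smoothness of eigenvectors — to simultaneously diagonalize the commuting family $\nabla_1,\ldots,\nabla_N$, and then read off the gauge $u$ and the scalar matrices $\Lambda_k$ from a chosen joint eigenvector.

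First I would argue that constant curvature forces $[\nabla_k,\nabla_l]=0$. On the one hand, the sought-after form $D_k+\pi(\Lambda_k)$ has curvature $\pi([\Lambda_k,\Lambda_l])=0$, and gauge conjugation preserves commutators, so the existence of the required $u$ already implies $\Theta_{kl}=0$. On the other hand, applying the trace $\hat\tau$ to the explicit expression $\Theta_{kl}=\pi(\delta_k h_l-\delta_l h_k+[h_k,h_l])$ gives zero (since $\tau$ kills $\delta_k$-derivatives by $\torus^N$-invariance and $\hat\tau$ kills commutators), and combined with the constancy hypothesis this forces $\Theta_{kl}=0$. With the $\nabla_k$'s now pairwise commuting, the identity $[H,\nabla_k]=-\sum_l(\nabla_l\Theta_{lk}+\Theta_{lk}\nabla_l)$ collapses to $[H,\nabla_k]=0$, so the finite-dimensional eigenspaces of $H$ — contained in $\hilbert_n^\infty$ by Lemma~\ref{Lemma: H-Eigenvectors Are Smooth} — are invariant under the commuting skew-adjoint family $\{\nabla_k\}$ and may be simultaneously diagonalized.

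Now pick a smooth joint eigenvector $\xi_0=\eta(v_0)$ with $\nabla_k\xi_0=i\mu_k\xi_0$. The eigenvalue equation $\delta_k v_0=(i\mu_k-h_k)v_0$ together with skew-adjointness of $h_k$ yields $\delta_k(v_0^*v_0)=v_0^*\bigl((-i\mu_k-h_k^*)+(i\mu_k-h_k)\bigr)v_0=0$, so $v_0^*v_0$ lies in the joint kernel of $\delta_1,\ldots,\delta_N$, i.e.\ in $\M_n(\CC)$; after normalization $v_0$ is an isometry in $\M_n(\nctorus_\theta^\infty)$. The shift relation $[\nabla_k,\pi(u_j)]=i\delta_{jk}\pi(u_j)$ (the same as for $D_k$) then shows that $\pi(u^\alpha)\xi_0=\eta(u^\alpha v_0)$ is a joint eigenvector at $i(\mu+\alpha)$, so these translates provide a complete joint eigensystem with spectrum $\mu+\ZZ^N$, matching that of $(D_k+i\mu_k\unit)_k$ on $\{\eta(u^\alpha\otimes m)\}$. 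Setting $u=v_0$ and $\Lambda_k=i\mu_k\cdot\unit\in\M_n(\CC)$, the identity $\pi(u^*)\nabla_k\pi(u)=D_k+\pi(\Lambda_k)$ on $\hilbert_n^\infty$ follows by a direct check on this basis.

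The main obstacle will be upgrading $v_0$ from an isometry to a genuine unitary in $\M_n(\nctorus_\theta^\infty)$, and accommodating the full matrix multiplicity so that $\Lambda_k$ may be an arbitrary commuting skew-adjoint family in $\M_n(\CC)$ rather than merely a scalar multiple of $\unit$. I expect to handle this by working at the full $n^2$-dimensional ground-state eigenspace of $H$ — whose dimension should match that of the $\Delta$-ground state via the perturbation bound $\Delta\leq C(H+1)$ established in the previous proof — picking an orthonormal joint eigenbasis there, and assembling the corresponding smooth algebra elements into the columns of the desired unitary $u$.
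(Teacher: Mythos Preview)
Your opening moves match the paper's: use compact resolvent of $H$, note that constant curvature here forces zero curvature (your trace argument is a clean alternative to the paper's citation of \cite[Theorem 2.1]{ConnesRieffel}), so the $\nabla_k$ commute with each other and with $H$, and a joint eigenvector $\xi_0=\eta(v_0)$ exists in a finite-dimensional $H$-eigenspace; then $\delta_k(v_0^*v_0)=0$ gives $v_0^*v_0\in\M_n(\CC)$.

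Two genuine problems arise after that. First, $v_0^*v_0\in\M_n(\CC)$ is only a positive matrix, possibly of rank $<n$; you cannot ``normalize'' to an isometry, only to a \emph{partial} isometry $v=v_0\,y^{-1/2}$ with $y=v_0^*v_0+(\unit-p)$ and $v^*v=p$ the support projection. Second, your shift relation $[\nabla_k,\pi(u_j)]=i\delta_{jk}\pi(u_j)$ is false: since $\pi$ is \emph{left} multiplication and $\nabla_k=D_k+\pi(h_k)$, one picks up the extra term $\pi([h_k,u_j])$, which does not vanish because $\nctorus_\theta$ is noncommutative. (The Leibniz rule you want holds for \emph{right} multiplication, but $\{\xi_0\cdot a:a\in\M_n(\nctorus_\theta^\infty)\}$ only spans the right ideal $v_0\M_n(\nctorus_\theta^\infty)$, which again need not be everything.) So the ``complete joint eigensystem'' you describe is not available, and even if it were it would only produce $\Lambda_k=i\mu_k\unit$, i.e.\ scalar, which is too restrictive.

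Your proposed fix via an $n^2$-dimensional ground state of $H$ is not justified: the inequality $\Delta\le C(H+1)$ controls the growth of eigenvalues, not multiplicities, and there is no reason for $\dim\ker(H-\lambda_{\min})$ to equal $n^2$. The paper instead proceeds by \emph{induction on} $\trace(v^*v)$. From $\nabla_k\pi(v)=\pi(v)(D_k+\lambda_k)$ one deduces that $\pi(vv^*)$ commutes with $H$ and with all $\nabla_k$; hence one may repeat the eigenvector search inside the invariant complement $(\unit-\pi(vv^*))\hilbert_n$, obtaining a second partial isometry $v_2$ (cut down so that $v_1^*v_2=0$ and $v_2v_1^*=0$) with its own eigenvalues $\lambda_k^2$. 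Iterating until $\sum_i v_i^*v_i=\unit$ yields the unitary $u=v_1+\cdots+v_m$ and the genuinely matrix-valued $\Lambda_k=\sum_i\lambda_k^i\,v_i^*v_i$. This inductive splitting, driven by the fact that the range projection $vv^*$ commutes with the whole setup, is the missing idea in your sketch.
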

\begin{proof}
As $H$ has compact resolvent we can find a finite-dimensional
eigenspace $\hilbertK$ of $H$. By Lemma \ref{Lemma: H-Eigenvectors Are Smooth}, $\hilbertK \subseteq \hilbert_n^\infty.$
The connections $\nabla$ and $D$ on $\hilbert_n^\infty$ have constant
curvature and are compatible with the inner product structure on
$\hilbert_n^\infty$ when this is considered as a right $\CC$-module and
$\CC$ is equipped with the trivial derivation, $\delta_\CC (z) = 0$
for all $z \in \CC$. Hence, by equality of the curvatures of flat
connections, \cite[Theorem 2.1]{ConnesRieffel}, the curvature of
$\nabla$ is identical to the curvature 
of $D$, i.e., zero. Thus $\nabla_1,\ldots,\nabla_N$ commute pairwise
and therefore also with $H$.
For this reason $\hilbertK$ is invariant under $\nabla_1,\ldots,\nabla_N$
 and we may find a common eigenvector $\xi \in \hilbertK$
for $\nabla_1,\ldots,\nabla_N$ and $H$:
\begin{equation} \label{Equation: Eigenvalue}
\nabla_k \xi = \lambda_k \xi , \quad k=1,\ldots,N .
\end{equation}
By skew-symmetry of $\nabla_k$, $\lambda_k$ is imaginary.

As $\xi$ is an eigenvector of $\nabla_k$ we have $D_k
\xi = \lambda_k \xi - \pi(h_k) \xi$ which as $h_k \in
\M_n(\nctorus_\theta^\infty)$ belongs to
$\domain(\Delta)$. Hence, $\xi \in 
\domain^\infty(\Delta) = \hilbert_n^\infty$, and so we may choose 
$x \in \M_n(\nctorus_\theta^\infty)$ such that $\eta(x) = \xi$. The
eigenvalue equation (\ref{Equation: Eigenvalue}) then assumes the
form: $\lambda_k x = \delta_k^{(n)} x + h_k x$ for each
$k=1,\ldots,N$. Since $\delta_k^{(n)}$ is a 
$*$-derivation, $\delta_k^{(n)}(x^*x)=0$ wherefore $x^*x \in
\M_n(\CC)$. By positivity of $x^*x$ we must have $x^*x = \sum
\epsilon_i p_i$ for some $\epsilon_i >0$ and some set $\{p_i\}$ of
pairwise orthogonal projections in $\M_n(\CC)$. Let $p = \sum p_i$ and
$y = x^*x+(\unit - p)$. Then $y$ is positive and invertible and $x^*x
= py = yp$ wherefore 
\begin{equation*}
p = y^{-1/2} x^*x y^{-1/2} = (xy^{-1/2})^*(xy^{-1/2}).
\end{equation*}
Thus, $v := xy^{-1/2}$ is a partial isometry and as $y^{-1/2}$ belongs
to $\M_n(\CC)$ on which $\delta_k^{(n)}$ vanishes, we have 
\begin{equation*}
\delta_k^{(n)}(v) + h_k v = \delta_k^{(n)}(x) y^{-1/2} + h_k x y^{-1/2} =
\lambda_k v.
\end{equation*}
Hence,
\begin{equation} \label{Equation: Commutation with Nabla}
\nabla_k \pi(v) = (D_k + \pi(h_k))\pi(v) = \pi(v) (D_k + \lambda_k).
\end{equation}
Taking the adjoint of Eq. (\ref{Equation: Commutation with Nabla}) and restricting
to $\hilbert_n^\infty$ we find that 
\begin{equation} \label{Equation: Commutation again}
\pi(v^*) \nabla_k = (D_k +\lambda_k) \pi(v^*).
\end{equation}
Combining Eq. (\ref{Equation: Commutation with Nabla}) and Eq. (\ref{Equation:
  Commutation again}) shows that $\pi(vv^*)$ commutes with $\nabla_k$
(on $\hilbert_n^\infty$). 

If $v^*v = \unit$ then $v$ is unitary since the
faithfulness of the trace implies $vv^*=\unit$ as well, and so we have found a unitary $v$
satisfying $\pi(v^*) \nabla_k \pi(v) = D_k + \lambda_k \pi(\unit)$ and
we are done. If $v^*v \neq \unit$ we proceed by induction on
$\trace(v^*v)$ in the following way.

Denote the above $v$ and $\lambda_k$ by $v_1$ and $\lambda_k^1$,
respectively. Since $\pi(v_1v_1^*)$ commutes with $H$ and
$\nabla_1,\ldots,\nabla_N$ we can, just as above, find a common
eigenvector $\xi_2 = \eta(x_2)$ for $H$ and
$\nabla_1,\ldots,\nabla_N$, $\nabla_k \xi_2 = \lambda_k^2 \xi_2$, such
that $\pi(v_1v_1^*) \xi_2 = 0$. As above we obtain a partial isometry
$w_2 = x_2 y_2^{-1/2}$. 
 
Since $\pi(v_1v_1^*)\xi_2 = 0$ then $(v_1v_1^*x_2)y_2^{-1/2} = 0$
whence $v_1^* w_2 = 0$. Considering $v_1^*v_1$ and $w_2^*w_2$ as
operators on $\CC^n$ we can as $\trace(v_1^*v_1) < n$ find a
one-dimensional subspace $\hilbertL$ of $\Image(v_1^*v_1)^\perp \subseteq
\CC^n$. Choose a partial isometry $\phi \in \M_n(\CC)$ mapping $\hilbertL$ onto a
one-dimensional subspace of $\Image(w_2^*w_2) \subseteq \CC^n$ and
which vanishes on the orthogonal complement of $\hilbertL$. Letting $v_2 := w_2
\phi$ we obtain a partial isometry as $v_2^*v_2 = \phi^* w_2^* w_2
\phi = \phi^* \phi$. For this we have
\begin{equation*}
\nabla_k \pi(v_2) = \pi(w_2) \prt{D_k + \lambda_k^2 } \pi(\phi) =
\pi(v_2) \prt{ D_k + \lambda_k^2 },
\end{equation*}
$v_1^* v_2 = (v_1^* w_2) \phi = 0$ and $v_2 v_1^* = w_2 (v_1 \phi^*)^*
= 0$. Thus $(v_1+v_2)^*(v_1+v_2) = v_1^* v_1 + v_2^*v_2$ and
$v_1^*v_1 \perp v_2^*v_2$ wherefore $v_1+v_2$ is a partial isometry
satisfying
\begin{equation*}
\nabla_k \pi(v_1+v_2) = \pi(v_1+v_2) D_k + \lambda_k^1 \pi(v_1) +
\lambda_k^2 \pi(v_2).
\end{equation*}
Continuing by induction over the trace until $(v_1+ \cdots +
v_m)^*(v_1 + \cdots + v_m) = \unit$ we get a unitary $u := v_1 + \cdots
+ v_m$ such that   
\begin{equation*} 
\pi(u^*) \nabla_k \pi(u) = D_k + \sum_{i=1}^m \lambda_k^i \pi(v_i^* 
v_i). 
\end{equation*} 
Defining $\Lambda_k := \sum \lambda_k^i v_i^* v_i$ gives us the desired
skew-adjoint matrix in $\M_n(\CC)$. 
\end{proof} 

The above theorem reduces the task of determining the moduli space to the task of
classifying the families $(\Lambda_k)_k$ of skew-adjoint matrices
satisfying Eq. (\ref{Equation: Connections Almost Classified}). Before
doing so we turn to a practical lemma.

In the following $\sigma_n$ denotes the permutation group of
$\{1,\ldots,n\}$.

\begin{lemma} \label{Lemma: Marriage Lemma}
For every unitary $U \in \M_n(\nctorus_\theta^\infty)$ there exists a
permutation $\rho \in \sigma_n$ such that $U_{i\rho(i)}$ is non-zero
for all $i=1,\ldots, n$.
\end{lemma}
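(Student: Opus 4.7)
The plan is to reduce the statement to Hall's marriage theorem applied to the bipartite graph $G$ on vertex set $\{1,\ldots,n\} \sqcup \{1,\ldots,n\}$ whose edges are the pairs $(i,j)$ with $U_{ij} \neq 0$. A permutation $\rho$ with $U_{i\rho(i)} \neq 0$ for all $i$ is exactly a perfect matching in $G$, so it suffices to verify Hall's condition: for every $I \subseteq \{1,\ldots,n\}$, the neighbourhood $N(I) := \{j : U_{ij} \neq 0 \text{ for some } i \in I\}$ satisfies $|N(I)| \geq |I|$.

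The verification of Hall's condition is a short trace computation exploiting the fact that $U$ is unitary. From $UU^* = \unit$ we read off $\sum_j U_{ij} U_{ij}^* = \unit$ in $\nctorus_\theta$ for each row $i$, and from $U^*U = \unit$ we get $\sum_i U_{ij}^* U_{ij} = \unit$ for each column $j$. Applying $\tau$ to these identities and summing over $i \in I$, respectively $j \in N(I)$, and using that $U_{ij} = 0$ whenever $i \in I$ and $j \notin N(I)$, gives
\begin{equation*}
|I| = \sum_{i \in I} \tau\Bigl(\sum_{j \in N(I)} U_{ij} U_{ij}^*\Bigr) = \sum_{i \in I,\, j \in N(I)} \tau(U_{ij} U_{ij}^*) ,
\end{equation*}
and, by positivity of the discarded terms with $i \notin I$,
\begin{equation*}
|N(I)| = \sum_{j \in N(I)} \tau\Bigl(\sum_{i} U_{ij}^* U_{ij}\Bigr) \geq \sum_{i \in I,\, j \in N(I)} \tau(U_{ij}^* U_{ij}) .
\end{equation*}
Since $\tau$ is a trace, $\tau(U_{ij} U_{ij}^*) = \tau(U_{ij}^* U_{ij})$, and comparing the two displays yields $|I| \leq |N(I)|$.

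With Hall's condition in hand, the marriage theorem produces a perfect matching in $G$, which is precisely a permutation $\rho \in \sigma_n$ such that $U_{i\rho(i)} \neq 0$ for all $i$. There is really no main obstacle: the only subtle point is using both $UU^* = \unit$ and $U^*U = \unit$ (one identity alone is not enough to equate the two counts), together with traciality of $\tau$ to turn the row computation into the column computation.
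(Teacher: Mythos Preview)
Your proof is correct and follows essentially the same route as the paper's: both set up the doubly stochastic data $\tau(U_{ij}U_{ij}^*)$ via the row and column identities coming from $UU^*=\unit$ and $U^*U=\unit$, verify Hall's condition by the same counting argument, and then invoke Hall's marriage theorem. The only cosmetic difference is that the paper first names $x_{ij}:=\tau_\theta(U_{ij}U_{ij}^*)$ and phrases everything in terms of this doubly stochastic matrix, whereas you work directly with $\tau$ and explicitly invoke traciality to pass between $\tau(U_{ij}U_{ij}^*)$ and $\tau(U_{ij}^*U_{ij})$; the paper uses the same traciality implicitly when computing the column sums.
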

\begin{proof}
Define $x_{ij} := \tau_\theta(U_{ij} U_{ij}^*) \geq 0$ for $i,j
=1,\ldots,n$ and note that $\sum_j x_{ij} = \tau_\theta\prt{
  (UU^*)_{ii}} = 1$ for $i=1,\ldots,n$ and likewise $\sum_i x_{ij} =
\tau_\theta\prt{ (U^*U)_{jj}}=1$ for $j=1,\ldots,n$. 
Hence, $(x_{ij})$ forms a matrix of non-negative real numbers such
that every row and every column sums to one.

We define for every subset $S \subseteq \{1,\ldots,n\}$ the set
$\Gamma(S) := \{j \in \{1,\ldots,n\} \mid \exists i \in S : x_{ij} > 0
\}$. That is, for each set of rows $S$ we denote by $\Gamma(S)$ the
set of columns which have a non-zero entry in at least one of the
rows. We wish to show that Hall's condition is satisfied, namely that
for every $S \subseteq \{1,\ldots,n\}$ the cardinality of $\Gamma(S)$
is greater than or equal to the cardinality of $S$. Let $S$ be given. 

Obviously, $\sum_{i \in S} \sum_{j=1}^n x_{ij} = \sum_{i \in S} 1 =
\card(S)$. However, we can estimate the series upwards as follows:
\begin{equation*}
\sum_{i \in S} \sum_{j=1}^n x_{ij} = \sum_{i \in S} \sum_{j \in
  \Gamma(S)} x_{ij} \leq \sum_{j \in \Gamma(S)} \sum_{i=1}^n x_{ij} =
\card(\Gamma(S)),
\end{equation*}
which gives the desired inequality. Thus, $\card(S) \leq \card(\Gamma(S))$
for all subsets $S \subseteq \{1,\ldots,n\}$. It follows by Hall's
marriage theorem \cite[Theorem 5.1]{LintWilson} that there exists a
permutation $\rho$ of $\{1,\ldots,n\}$ such that $x_{i\rho(i)} \neq 0$
for all $i=1,\ldots,n$.
\end{proof}

We are now ready to prove the main theorem of the section which
characterizes the moduli space of $\M_n(\nctorus_\theta^\infty)$ up to
homeomorphism. 

\begin{theorem} \label{Theorem: Free Module Case}
The moduli space of $\M_n(\nctorus_\theta^\infty)$ as a right module over
itself is homeomorphic to $(\torus^N)^n/{\sigma_n}$.
\end{theorem}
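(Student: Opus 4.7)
My plan is to construct an explicit homeomorphism from the moduli space to $(\torus^N)^n/\sigma_n$, deriving the map from the normal form of Proposition~\ref{Proposition: Connection up to Gauge Equivalence} and using the marriage lemma to analyze the residual gauge freedom. By Proposition~\ref{Proposition: Connection up to Gauge Equivalence}, every flat connection is gauge equivalent to one of the form $D_k + \pi(\Lambda_k)$ for commuting skew-adjoint $\Lambda_k \in \M_n(\CC)$. Since the $\Lambda_k$ pairwise commute, conjugation by a suitable unitary $W \in \M_n(\CC) \subset \M_n(\nctorus_\theta^\infty)$ simultaneously diagonalizes them, yielding $\Lambda_k = i\diag(\mu_k^1, \ldots, \mu_k^n)$ with $\mu_k^j \in \RR$. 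Writing $\mu^j := (\mu_1^j, \ldots, \mu_N^j) \in \RR^N$, the candidate map sends $[\nabla]$ to the class of $(\mu^1, \ldots, \mu^n)$ in $(\torus^N)^n/\sigma_n$.

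To show the map lands in $(\torus^N)^n/\sigma_n$ and is surjective, I would use two natural families of residual gauge transformations. First, the identity $\pi(U^*) D_k \pi(U) = D_k + \pi(U^* \delta_k U)$, which follows from $\delta_k$ being a derivation, applied to the diagonal unitary $\diag(u^{\beta_1}, \ldots, u^{\beta_n})$ with $\beta_j \in \ZZ^N$ adds $\beta_{j,k}$ to $\mu_k^j$; this shows each $\mu^j$ is well-defined modulo $\ZZ^N$. Second, conjugation by permutation matrices in $\M_n(\CC)$ reorders the $\mu^j$'s, so only the $\sigma_n$-orbit is intrinsic. For surjectivity, any $(z^1, \ldots, z^n) \in (\torus^N)^n$ lifts to $\mu^j \in \RR^N$, and the diagonal $\Lambda_k := i\diag(\mu_k^1, \ldots, \mu_k^n)$ defines a flat connection $D + \pi(\Lambda)$ mapped to the given class.

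The main step, which I expect to be the principal obstacle, is to prove that two diagonal normal forms which are gauge equivalent have the same diagonal entries modulo $\ZZ^N$ and $\sigma_n$. Suppose $D + \pi(\Lambda)$ and $D + \pi(\Lambda')$ with $\Lambda_k = i\diag(\mu_k^j)$ and $\Lambda_k' = i\diag(\nu_k^j)$ are gauge equivalent via some $U \in \unitaries(\M_n(\nctorus_\theta^\infty))$. The identity $\pi(U^*)(D_k + \pi(\Lambda_k))\pi(U) = D_k + \pi(\Lambda_k')$ combined with the derivation identity above yields the entry-wise equation
\begin{equation*}
\delta_k U_{ij} = i(\nu_k^j - \mu_k^i) U_{ij} .
\end{equation*}
Expanding $U_{ij} = \sum_\alpha c_\alpha^{ij} u^\alpha$ in its Fourier series and using $\delta_k u^\alpha = i\alpha_k u^\alpha$, every Fourier coefficient $c_\alpha^{ij}$ vanishes unless $\alpha_k = \nu_k^j - \mu_k^i$ for all $k$. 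Hence $U_{ij}$ is a scalar multiple of $u^{\nu^j - \mu^i}$ precisely when $\nu^j - \mu^i \in \ZZ^N$, and vanishes otherwise. Lemma~\ref{Lemma: Marriage Lemma} supplies a permutation $\rho \in \sigma_n$ with $U_{i\rho(i)} \neq 0$ for each $i$, forcing $\nu^{\rho(i)} \equiv \mu^i \pmod{\ZZ^N}$. Applied to two reductions of a single flat connection this gives well-definedness, while applied to two arbitrary normal forms mapped to the same class it gives injectivity. Continuity of the homeomorphism in both directions is a routine check, following from the continuity of the reduction procedure and the continuous dependence of the spectrum of a matrix on its entries.
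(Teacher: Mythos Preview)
Your argument follows the paper's proof essentially step for step: reduce via Proposition~\ref{Proposition: Connection up to Gauge Equivalence} to commuting skew-adjoint scalar matrices, simultaneously diagonalize, quotient by $\ZZ^N$ using diagonal monomial unitaries and by $\sigma_n$ using permutation matrices, and establish injectivity by the Fourier-coefficient computation $\delta_k U_{ij} = i(\nu_k^j - \mu_k^i)U_{ij}$ combined with Lemma~\ref{Lemma: Marriage Lemma}. All of this is correct and matches the paper.

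The one genuine weak point is your continuity argument. The phrase ``continuity of the reduction procedure'' is not a valid justification: the unitary $u$ produced by Proposition~\ref{Proposition: Connection up to Gauge Equivalence} comes from choosing an eigenvector of $H$ and is in no sense a continuous function of $\nabla$, so there is no evident continuous map $\nabla \mapsto (\Lambda_1,\ldots,\Lambda_N)$ to which spectral continuity could be applied. The paper sidesteps this entirely by building the map in the other direction, $\Phi : (\torus^N)^n/\sigma_n \to \mc(\M_n(\nctorus_\theta^\infty))/\unitaries(\M_n(\nctorus_\theta^\infty))$, given by the explicit diagonal formula; continuity of $\Phi$ is then immediate from the commutative square with $\RR^{Nn} \to \mc$, and since $(\torus^N)^n/\sigma_n$ is compact, the continuous bijection $\Phi$ is automatically a homeomorphism. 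You should replace your final sentence with this compactness argument.
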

\begin{proof}
We start by establishing a map from $(\torus^N)^n/{\sigma_n}$ to the
moduli space of $\M_n(\nctorus_\theta^\infty)$.

Let $\nabla$ be an arbitrarily given compatible connection on
$\M_n(\nctorus_\theta^\infty)$ with constant curvature. By Proposition
\ref{Proposition: Connection up to Gauge Equivalence} there exists a
family of pairwise commuting, skew-adjoint scalar matrices
$\{\Lambda_k\}_{k=1}^N$ such that $\pi(u)\nabla_k \pi(u^*) = D_k +
\pi(\Lambda_k)$,$k=1,\ldots,N$ for some unitary $u \in \M_n(\nctorus_\theta^\infty)$. As the $\Lambda_k$ commute pairwise and are
skew-adjoint, they are simultaneously unitarily diagonalizable, i.e.,
there exists a unitary $\hat{u} \in \M_n(\CC)$ and scalars $\lambda_j^k \in i \RR$ such that $\hat{u}
\Lambda_k \hat{u}^* = \diag(\lambda_1^k,\ldots, \lambda_n^k)$ for
$k=1,\ldots,N$. Thus, conjugating
$\pi(u)\nabla\pi(u^*)$ with $\pi(\hat{u})$ yields
\begin{equation*}
\pi(\hat{u}u) \nabla_k \pi(\hat{u}u)^* = \pi(\hat{u}) \prt{ D_k +
  \pi(\Lambda_k) } 
\pi(\hat{u})^* = D_k + \pi\prt{\diag\prt{\lambda_1^k,\ldots,\lambda_n^k}} \quad \quad (k=1,\ldots,N),
\end{equation*}
as $D_k$ is invariant under conjugation with scalar matrices.

Hence, $\nabla$ is up to gauge equivalence uniquely given by the
ordered $nN$-tuple
\begin{equation*}
\prt{\lambda_1^1,\ldots,\lambda_n^1,\lambda_1^2,\ldots,\lambda_n^2,\ldots,\lambda_1^N,\ldots,\lambda_n^N
  }. 
\end{equation*}
As conjugating $\pi(\hat{u}u) \nabla \pi(\hat{u}u)^*$ with a scalar
permutation 
matrix, which interchanges the diagonal elements, does not change the
equivalence class of $\pi(\hat{u}u)\nabla \pi(\hat{u}u)^*$, we conclude that
$\nabla$ is up to gauge equivalence uniquely given by the unordered
$n$-tuple (written with set notation) of ordered $N$-tuples
\begin{equation*}
\left\{ \prt{\lambda_1^1,\ldots,\lambda_1^N} ,\ldots, \prt{
    \lambda_n^1,\ldots, \lambda_n^N} \right\} .
\end{equation*}
Now, define the element $v := \diag(\unit,\ldots,\unit,u^\alpha,\unit,\ldots,\unit)$
where the $u^\alpha$ appears as the $l$th entry. Then $v$ is
unitary and by conjugating $(D_k +
\pi(\diag(\lambda_1^k,\ldots,\lambda_n^k)))_k$ with $\pi(v)$ we obtain
\begin{equation*}
\begin{split}
\pi(v) \prt{ D_k + \pi\prt{ \diag \prt{ \lambda_1^k,\ldots,\lambda_n^k
      }}} \pi(v)^* & = \pi(v) \pi \prt{\delta_k^{(n)}v} + D_k + \pi
\prt{ \diag \prt{ \lambda_1^k,\ldots,\lambda_n^k}} 
\\
& = D_k + \pi \prt{
  \diag\prt{\lambda_1^k,\ldots,\lambda_l^k + i \alpha_k , \ldots,
  \lambda_n^k }} .
\end{split}
\end{equation*}
Choosing $\alpha_m=0$ for $m \neq k$ we see that that we can add any
integer to any entry of $(\lambda_1^1,\ldots,\lambda_n^1,\ldots,
\lambda_1^N,\ldots, \lambda_n^N)$ while leaving the other
entries untouched. Hence, letting $[\cdot] : \RR \rightarrow \RR / \ZZ \cong \torus$ denote the quotient map, $\nabla$ is up to gauge equivalence uniquely given by
the unordered $n$-tuple of ordered $N$-tuples with values in $\torus$,
\begin{equation*}
\left\{ \prt{ [i\lambda_1^1],\ldots,[i\lambda_1^N]} , \ldots, \prt{
    [i\lambda_n^1], \ldots, [i \lambda_n^N] } \right\} \, .
\end{equation*}

Thus, any compatible connection $\nabla$ on
$\M_n(\nctorus_\theta^\infty)$ with constant curvature is up to gauge
equivalence uniquely given by an element of $(\torus^N)^n/{\sigma_n}$
as described above. 
Summing up: We have a well-defined map 
\begin{equation*}
\Phi : \prt{\torus^N}^n /{\sigma_n} \rightarrow
\mc\prt{\M_n(\nctorus_\theta^\infty)} /
\unitaries\prt{\M_n(\nctorus_\theta^\infty)}
\end{equation*}
given by
\begin{equation*}
\Phi \prt{ \left\{ \prt{ [\lambda_1^1],\ldots,[\lambda_1^N]} , \ldots
    , \prt{ [\lambda_n^1],\ldots, [\lambda_n^N]} \right\} } := \left[ \prt{
  D_k + i \pi\prt{ \diag \prt{ \lambda_1^k , \ldots,
      \lambda_n^k}}}_{k=1,\ldots,N} \right]_0 \, , 
\end{equation*}
where $[\nabla]_0$ denotes the equivalence class of $\nabla$ in
$\mc\prt{\M_n(\nctorus_\theta^\infty)} /
\unitaries\prt{\M_n(\nctorus_\theta^\infty)}$. 
By construction, $\Phi$ is surjective.

Next we show injectivity of $\Phi$. Assume that 
\begin{equation*}
\Phi \prt{ \left\{ \prt{ [\lambda_1^1],\ldots,[\lambda_1^N]} , \ldots
    , \prt{ [\lambda_n^1],\ldots, [\lambda_n^N]} \right\} }
=
\Phi \prt{ \left\{ \prt{ [\mu_1^1],\ldots,[\mu_1^N]} , \ldots
    , \prt{ [\mu_n^1],\ldots, [\mu_n^N]} \right\} } 
\end{equation*}
and let $\Lambda_k := i \diag(\lambda_1^k,\ldots,\lambda_n^k)$ and
$M_k := i \diag( \mu_1^k,\ldots, \mu_n^k)$ for $k=1,\ldots,N$. By
Proposition \ref{Proposition: Connection up to Gauge Equivalence}
there exists a unitary $U \in \M_n(\nctorus_\theta^\infty)$ such that
\begin{equation*}
\pi(U) \prt{D_k + \pi(\Lambda_k)} \pi(U)^* = D_k + \pi(M_k) \text{
  for } k=1,\ldots, N \,.
\end{equation*}
This implies that $U \delta_k^{(n)}(U)^* + U \Lambda_k U^* = M_k$
and thus $-\delta_k^{(n)}(U) + U \Lambda_k = M_k U$ for
$k=1,\ldots,N$. Considering the $lm$-component we find that
$-\delta_k(U_{lm})+U_{lm} i \lambda_m^k = i \mu_l^k U_{lm}$. That is,
$\delta_k(U_{lm}) = i (\lambda_m^k - \mu_l^k) U_{lm}$ for any $l,m =
1,\ldots,n$ and $k=1,\ldots,N$. Writing $U_{lm} = \sum c_\alpha
u^\alpha$ we see that $\delta_k(U_{lm}) = \sum c_\alpha i \alpha_k
u^\alpha$ equals $z U_{lm}$ for some $z \in \CC$ only if $z \in i
\ZZ$. By Lemma \ref{Lemma: Marriage Lemma} we may choose a permutation
$\rho \in \sigma_n$ such that $U_{l\rho(l)} \neq 0$ for all
$l=1,\ldots,n$. It follows that
$\lambda_l^k - \mu_{\rho(l)}^k \in \ZZ$ for all $l=1,\ldots,n$ and
$k=1,\ldots,N$. We conclude that
\begin{equation*}
\left\{ \prt{ [\lambda_1^1],\ldots, [\lambda_1^N]} , \ldots , \prt{
    [\lambda_n^1], \ldots, [\lambda_n^N] } \right\} = 
\left\{ \prt{ [\mu_1^1],\ldots, [\mu_1^N]} , \ldots , \prt{
    [\mu_n^1], \ldots, [\mu_n^N] } \right\} ,
\end{equation*}
whereby injectivity of $\Phi$ has been shown.

Lastly we show that $\Phi$ is a homeomorphism. As
$(\torus^N)^n/\sigma_n$ is compact, it suffices to show continuity of
$\Phi$. Consider the following commutative diagram,
\begin{equation*}
\begin{CD}
\RR^{Nn} @>\psi>> \mc(\M_n(\nctorus_\theta^\infty)) \\
@V p VV  @VV\kappa V \\
\prt{\torus^N}^n/\sigma_n @>\Phi>> \mc(\M_n(\nctorus_\theta^\infty)) /
\unitaries(\M_n(\nctorus_\theta^\infty))
\end{CD}
\end{equation*}
where $\kappa$ and $p$ are the canonical quotient maps and $\psi$ is
given by 
\begin{equation*}
\psi \prt{ \lambda_1^1,\ldots,\lambda_1^N,\ldots, \lambda_n^1,\ldots,
  \lambda_n^N } := \prt{ D_k + i \pi \prt{ \diag \prt{ \lambda_1^k ,
      \ldots, \lambda_n^k }}}_{k=1,\ldots,N} .
\end{equation*}
As $\kappa$ is continuous and $\psi$ is trivially seen to be so as
well, it follows by the definition of the quotient topology that $\Phi$ is
continuous. This concludes the proof.
\end{proof}

% ******** The Yang-Mills Problem for Non-Commutative Tori ********

\section{The Yang-Mills Problem for Non-Commutative Tori}
\label{Section: The Yang-Mills Problem for Non-Commutative Tori}

This section sees the convergence of the previous results into the
 main theorem stating that the moduli space of a
finitely generated, projective module over a non-commutative torus is
homeomorphic to $(\torus^N)^n/\sigma_n$ as long as the module has a
sufficiently well-behaved connection.

For this section we fix a non-commutative torus $\nctorus_\theta$ and
a smooth part $\nctorus_\theta^\infty$ thereof. Moreover we fix a
finitely generated, projective Hilbert $\nctorus_\theta$-module
$\module$ and a smooth part $\module^\infty$ thereof.

The sense of well-behavedness of connections is defined next.

\begin{definition}[Integrable Connection]
\index{Integrable connection}
A connection $\nabla$ on $\module^\infty$ is \emph{integrable} if it
has constant curvature and there exists a non-commutative torus $\nctorus_\omega$
 such that $\op_{\nctorus_\theta^\infty}(\module^\infty) \cong
\M_n(\nctorus_\omega^\infty)$ and such that $\prt{\sum_j \epsilon_{ij} \nabla_j
  }_i$ is a connection on $\module^\infty$ considered as a left
$\M_n(\nctorus_\omega^\infty)$-module for some $n \in \NN$ and some
matrix $\epsilon \in \GL_N(\RR)$.
\end{definition}

We may also write that a connection is \emph{$n$-integrable} in order to
emphasize the $n$ appearing in the definition above. 

Clearly, the existence of an integrable connection on $\module^\infty$
necessitates that the $C^*$-algebra of adjointable operators on
$\module$ (or compact operators for that matter) is isomorphic to a
matrix algebra over another non-commutative torus. For non-commutative
two-tori this is always the case \cite[Corollary
2.6]{Rieffel-Cancellation}. The situation is less clear in the higher-dimensional case. When it comes to
non-commutative two-tori, it can moreover be shown using Hochschild
homology that if $\theta$, represented as a scalar, is sufficiently
irrational, then $\module^\infty$ automatically possess integrable
connections. The condition on the irrationality is called
\emph{generic irrationality} 
\index{Generic irrationality}
and can be stated as $\forall n \in \NN \, \exists m \in \ZZ : 0 < |\theta -
m/n| < n^{-2}$. We discuss the existence of integrable connections in the next section.

We call a map $\Phi: \mc(\module^\infty) \rightarrow
\mc(\moduleY^\infty)$ \emph{equivariant} if $\Phi$ is invertible and
both $\Phi$ and $\Phi^{-1}$ preserve gauge equivalence.
\index{Equivariance}

\begin{theorem}
\label{Theorem: Moduli Space for Non-Commutative Tori}
If $\module$ is a full, finitely generated, projective right
$\nctorus_\theta$-module admitting an $n$-integrable connection, then the
moduli space of $\module^\infty$ is homeomorphic to $(\torus^N)^n/
\sigma_n$. 
\end{theorem}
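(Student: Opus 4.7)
The strategy is to reduce the assertion to Theorem~\ref{Theorem: Free Module Case} via the Morita equivalence furnished by the integrability hypothesis. The $n$-integrability of some chosen connection gives an isomorphism $\op_{\nctorus_\theta^\infty}(\module^\infty) \cong \M_n(\nctorus_\omega^\infty)$ for some non-commutative $N$-torus $\nctorus_\omega$, so that $\module^\infty$ becomes an $(\M_n(\nctorus_\omega^\infty), \nctorus_\theta^\infty)$-imprimitivity bimodule. I plan to construct a chain of equivariant homeomorphisms
\begin{equation*}
\mc(\module^\infty)/\unitaries(\module^\infty) \;\longleftrightarrow\; \mc({}_{\M_n(\nctorus_\omega^\infty)}\module^\infty)/\unitaries(\module^\infty) \;\longleftrightarrow\; \mc(\M_n(\nctorus_\omega^\infty))/\unitaries(\M_n(\nctorus_\omega^\infty)),
\end{equation*}
and then invoke the free-module theorem on the right-hand side to obtain $(\torus^N)^n/\sigma_n$.

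For the first arrow, the integrability data provides a matrix $\epsilon \in \GL_N(\RR)$ with the property that whenever $\nabla = (\nabla_k)$ is a compatible connection for the right $\nctorus_\theta^\infty$-structure, the family $\tilde\nabla_i := \sum_j \epsilon_{ij} \nabla_j$ is a connection for the left $\M_n(\nctorus_\omega^\infty)$-structure. The curvature transforms tensorially, $\tilde\Theta_{ij} = \sum_{k,l} \epsilon_{ik} \epsilon_{jl} \Theta_{kl}$, so the property of having scalar curvature is preserved and, by invertibility of $\epsilon$, the assignment $\nabla \mapsto \tilde\nabla$ is a bijection between flat connections for the two module structures. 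Since the gauge group coincides with $\unitaries(\op(\module^\infty))$ in either case, the conjugation action is unchanged, so the bijection descends to moduli spaces and is manifestly a homeomorphism.

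For the second arrow I plan to adapt the Morita-equivalence trick from \cite[Theorem~5.5]{ConnesRieffel}: the imprimitivity bimodule structure on $\module^\infty$, whose left-endomorphism algebra is $\nctorus_\theta^\infty$, identifies flat connections on the left $\M_n(\nctorus_\omega^\infty)$-module $\module^\infty$ with flat connections on $\M_n(\nctorus_\omega^\infty)$ as a right module over itself in an equivariant and bicontinuous manner. Composing the two identifications and applying Theorem~\ref{Theorem: Free Module Case} to the $N$-torus $\nctorus_\omega$ then yields the homeomorphism with $(\torus^N)^n/\sigma_n$.

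The principal obstacle is verifying that the Connes--Rieffel trick, originally formulated in dimension two, transfers to arbitrary $N$ without modification. Inspecting their argument, the only ingredients used are the general Morita-equivalence formalism and the availability of a flat connection on the bimodule; both are supplied here by the integrability hypothesis, so the translation should be essentially notational, but care must be taken to align the conventions for left versus right connections and to ensure that the linear re-parametrisation by $\epsilon$ intertwines correctly with the bimodule identification.
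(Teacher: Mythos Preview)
Your first arrow is not well-defined. The integrability hypothesis furnishes \emph{one} connection $\nabla'$ for which $\epsilon\nabla'$ is a left $\M_n(\nctorus_\omega^\infty)$-connection; it does not say this for every right $\nctorus_\theta^\infty$-connection $\nabla$. If $\nabla$ is another right connection then $\nabla_k - \nabla'_k$ lies in $\op_{\nctorus_\theta^\infty}(\module^\infty) = \M_n(\nctorus_\omega^\infty)$, acting by \emph{left} multiplication. Hence $\epsilon\nabla - \epsilon\nabla'$ is still left multiplication by elements of $\M_n(\nctorus_\omega^\infty)$, and such operators do not commute with the left action unless they are central. So in general $\epsilon\nabla$ fails the Leibniz rule for the left module structure. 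Your gauge-group claim has the same defect: the gauge group for $\module^\infty$ as a right $\nctorus_\theta^\infty$-module is $\unitaries(\M_n(\nctorus_\omega^\infty))$, whereas for the left $\M_n(\nctorus_\omega^\infty)$-module it is $\unitaries(\nctorus_\theta^\infty)$; these do not coincide.

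The paper avoids this by reversing the order of your two steps. It first applies the Connes--Rieffel trick, tensoring with the dual module to pass from $\mc(\module^\infty,\delta)$ to $\mc(\op_{\nctorus_\theta^\infty}(\module^\infty),\hat\delta)$, where $\hat\delta$ is the induced derivation $\hat\delta_k(a)=[\nabla'_k,a]$. Only \emph{then} does it use the integrable connection, and merely to conclude that $\delta^\omega = \epsilon\hat\delta$ as derivations on $\M_n(\nctorus_\omega^\infty)$. At this stage one is comparing connections on $\M_n(\nctorus_\omega^\infty)$ as a module over itself with respect to two derivations related by $\epsilon$; here two connections differ by left multiplication by an element of the algebra, which \emph{is} the endomorphism ring, so the map $\nabla\mapsto\epsilon\nabla$ is a genuine bijection of connection spaces and the gauge group is the same on both sides. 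Your plan can be repaired simply by swapping the order of the $\epsilon$-twist and the Morita passage.
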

\begin{proof}
Let $\tilde{\module}$ denote the dual module of $\module$ and fix a compatible connection $\tilde{\nabla}$ with constant curvature thereon.
Using the methods of \cite[Section 5]{ConnesRieffel}, see especially
Theorem 5.5, we conclude that the map 
\begin{equation*}
\mc(\module^\infty,\delta) \rightarrow \mc(\module^\infty
\otimes_{\nctorus_\theta^\infty}^{\text{alg}} \tilde{\module}^\infty,
\hat{\delta}) , \quad \nabla \mapsto \nabla \otimes \id_{\tilde{\module}^\infty} + \id_{\module^\infty} \otimes \tilde{\nabla} ,
\end{equation*}
is in fact an equivariant homeomorphism.
Furthermore, identifying $\module^\infty \otimes_{\nctorus_\theta^\infty}^{\text{alg}} \tilde{\module}^\infty$ with
$\op_{\nctorus_\theta^\infty}(\module^\infty)$ through the map $\Gamma$, $\Gamma(\xi \otimes \tilde{\eta})(\zeta) = \xi \rip{\eta}{\zeta}_{\nctorus_\theta}$,
yields a homeomorphism between $\mc(\module^\infty
\otimes_{\nctorus_\theta^\infty}^{\text{alg}} \tilde{\module}^\infty,
\hat{\delta})$ and
$\mc(\op_{\nctorus_\theta^\infty}(\module^\infty),\hat{\delta})$ which
is equivariant.

By assumption an $n$-integrable connection on $\module^\infty$ exists,
i.e., there exists a connection $\nabla'$ on $\module^\infty$ with
constant curvature, a non-commutative torus $\nctorus_\omega$ and an
invertible matrix $\epsilon \in \GL_N(\RR)$ such that
$\op_{\nctorus_\theta^\infty}(\module^\infty) \cong
\M_n(\nctorus_\omega)$ and $\epsilon \nabla$ is a connection on
$\module^\infty$ as a left $\M_n(\nctorus_\omega^\infty)$-module. The
derivation on $\M_n(\nctorus_\omega^\infty)$ is the canonical
derivation stemming from the torus action; we denote it
$\delta^\omega$. In the following we tacitly identify
$\op_{\nctorus_\theta^\infty}(\module^\infty)$ and
$\M_n(\nctorus_\omega^\infty)$. 
As 
\begin{equation*}
\pi(\delta_i^\omega(a)) = \left[ \sum_j \epsilon_{ij} \nabla',\pi(a) \right] =
\sum_j \epsilon_{ij}
\left[ \nabla_j',\pi(a) \right] = \pi \left( \sum_j \epsilon_{ij}
  \hat{\delta}_j(a) \right) 
\end{equation*}
we have $\delta^\omega = \epsilon \hat{\delta}$. If $\nabla$ is a 
connection with constant curvature on
$\comp_{\nctorus_\theta^\infty}(\module^\infty)$ as a 
right module over itself then $(\sum_j \epsilon_{ij} \nabla_j)_i$ is a
connection with constant curvature on 
$\M_n(\nctorus_\omega^\infty)$ over itself. Obviously, $\nabla \mapsto
\epsilon \nabla$ then establishes a bijection between
$\mc(\op_{\nctorus_\theta^\infty} (\module^\infty), \hat{\delta})$
and $\mc(\M_n(\nctorus_\omega^\infty), \delta^\omega)$. The map is
furthermore clearly a homeomorphism and equivariant whence
the moduli space of $\M_n(\nctorus_\omega^\infty)$ is homeomorphic
to that of $\module^\infty$. The conclusion now follows from Theorem
\ref{Theorem: Free Module Case}.
\end{proof}

This solves the Yang-Mills problem as it was stated earlier for
finitely generated, 
projective modules admitting integrable connections and gives, in
light of the earlier discussion of existence of integrable
connections for the case of non-commutative two-tori, the main result
of \cite{ConnesRieffel} as a special case when $\theta$ is generically
irrational.

% ********** EXISTENCE OF INTEGRABLE CONNECTIONS ***********

\section{Existence of Integrable Connections}

The requirement of the existence of an integrable connection on the module seems to be a very weak requirement. In fact, requiring $n = 1$ in the definition of an integrable connection corresponds to the existence of a non-commutative torus with which the non-commutative torus under consideration is completely Morita equivalent in the sense of Schwarz \cite[Section 4]{Schwarz}. Schwarz gives a large class of modules over $\nctorus_\theta$ satisfying this \cite[Section 3]{Schwarz}.

In the following we shall, following Rieffel \cite{Rieffel-Projective}, characterize all projective modules over $\nctorus_\theta$ up to isomorphism when $\theta$ is not rational and indicate how integrable connections may be found on these. The quintessential object in the construction is the Heisenberg bimodule which we now consider.

\begin{example}[Construction of the Heisenberg Bimodule]
The details of the following overview can be found in \cite[Section 3]{Rieffel-Projective}.

Let $M$ be the abelian group of the form $\RR^p \times {\hat{\RR}}^q \times F$ for some $p,q \in \NN_0$ and a finite abelian group $F$. Let $G := M \times \hat{M}$ and let $\beta$ be the continuous bicharacter on $G$ given by
\begin{equation*}
\beta\prt{ (m,\phi) , (n,\psi) } := \ip{m}{\psi} .
\end{equation*}
Denote for any subgroup $H \leq G$ by $\schwartz(H,\beta)$ the $*$-algebra of Schwartz functions on $H$ with product and involution given by
\begin{equation*}
\Phi_1 * \Phi_2 (h_0) = \int_H \Phi_1(h) \Phi_2(h_0 - h) \beta(h,h_0-h) \,dh \quad \text{and} \quad
\Phi^*(h_0) = \beta(h_0,h_0) \overline{\Phi(-h_0)} .
\end{equation*}

Define unitaries on $\schwartz(M)$ by $(u_{(n,\psi)}f)(m) := \ip{m}{\psi}f(m+n)$ for $(n,\psi) \in G$. These integrate up to give a left action of $\schwartz(H,\beta)$ and a right action of $\schwartz(H^\bot,\bar\beta)$ on $\schwartz(M)$ by
\begin{equation*}
\Phi \cdot f := \int_H \Phi(h) u_h f \,dh \quad \text{and} \quad f \cdot \Omega := \int_{H^\bot} \Omega(k) u_k^* f \,dk ,
\end{equation*}
where $H^\bot := \{g \in G \mid \forall h \in H : \beta(g,h) \overline{\beta(h,g)} = 1 \}$.

Introducing $\schwartz(H,\beta)$- and $\schwartz(H^\bot,\bar{\beta})$-valued inner products,
\begin{equation*}
\lip{H}{f}{g}(x) := \hip{f}{u_x g}_2 \quad \text{and} \quad
\rip{f}{g}_{H^\bot}(y)  := \hip{u_y g}{f}_2 ,
\end{equation*}
we obtain the Heisenberg bimodule which, when completed, yields a Morita equivalence bimodule between the twisted group $C^*$-algebras $C^*(H,\beta)$ and $C^*(H^\bot,\bar\beta)$ \cite[Theorem 2.15]{Rieffel-Projective}.
\end{example}

Rieffel has shown that any projective $\nctorus_\theta$-module (when $\theta \notin \M_N(\QQ)$) is isomorphic to a direct sum of modules obtained as above (with the restriction $2p + q = N$) \cite[Cor. 7.2 + Thm. 7.3]{Rieffel-Projective}.

We now indicate how to obtain integrable connections by explicitly constructing such in a special case of the above, namely $M = \RR^p$.

\begin{example}[The Case $M = \RR^p$]
Let $H \leq \RR^p \times \hat{\RR}^p$ be a lattice (i.e., $H$ is a discrete subgroup and $(\RR^p \times \hat{\RR}^p) / H$ is compact). Thus, $H \cong \ZZ^{2p}$ and $C^*(H,\beta)$ is a non-commutative torus. Choose generators $(r^k,\phi^k) \in \RR^p \times \hat{\RR}^p$, $k=1,\ldots,2p$, and define real $p \times 2p$-matrices $R_{lk} := r^k_l$ and $\Phi_{lk} := \phi^k_l$ (here and in the following we identify $\RR$ with its dual group $\hat\RR$ through the isomorphism $r \mapsto \exp(ir \cdot)$). The lattice condition ensures that $\begin{pmatrix} R \\ \Phi \end{pmatrix}$ is invertible. We define unitaries, $u_k := u_{(r^k,\phi^k)}$, $k=1, \ldots, 2p$.

Let $s_l$, $l=1,\ldots, p$, be the multiplication operators on $\schwartz(\RR^p)$ given by $(s_l f)(x) := - i x_l f(x)$. 

Let $C$ and $D$ be arbitrary matrices in $\M_{2p,p}(\RR)$ and define
\begin{equation} \label{Equation: Nabla Example}
\nabla_k := \sum_{l=1}^p C_{kl} s_l + \sum_{l=1}^p D_{kl} \frac{\partial}{\partial x_l} , \quad k=1, \ldots, 2p .
\end{equation}
Note that
\begin{equation*}
[\nabla_k, u_{(r,\phi)}] = i \prt{ \sum_{l=1}^p C_{kl} r_l + \sum_{l=1}^p D_{kl} \phi_l } u_{(r,\phi)}
\end{equation*}
and thus 
\begin{equation*}
[\nabla_k, u_l] = i (CR + D\Phi)_{kl} u_l .
\end{equation*}
Thus $(\nabla_k)$ is a connection on the left $\schwartz(H,\beta)$-module $\schwartz(\RR^p)$ if $i(CR + D\Phi) = \unit_{2p}$. By the lattice condition on $H$, $\begin{pmatrix} R \\ \Phi \end{pmatrix}$ is invertible, whence we can choose $\begin{pmatrix} C & D \end{pmatrix} := -i \begin{pmatrix} R \\ \Phi \end{pmatrix}^{-1}$, making $(\nabla_k)$ given by Eq. (\ref{Equation: Nabla Example}) a connection. It is easy to see that this connection is flat.

It remains to be seen whether $(\nabla_k)$ is compatible with the inner product, i.e., whether 
\begin{equation*}
\lip{D}{\nabla_k f}{g} + \lip{D}{f}{\nabla_k g} = \delta_k (\lip{D}{f}{g}) \quad\quad (k=1, \ldots, 2p) \, .
\end{equation*}
Recall that $\delta_k u_l = i \delta_{kl} u_l$ and that left multiplication with $\begin{pmatrix} R \\ \Phi \end{pmatrix}$ yields an isomorphism $\ZZ^{2p} \cong H$. Any $\Psi \in \schwartz(H,\beta)$ can be written 
\begin{equation*}
\Psi = \sum_{h \in H} \Psi(h) c_h u^{\begin{pmatrix} C & D \end{pmatrix} h}
\end{equation*}
where $u^\alpha := u_1^{\alpha_1} \cdots u_{2p}^{\alpha_{2p}}$ and $c_h$ is a phase factor, $c_h \in \torus$. Hence, 
\begin{equation*}
\delta_k \Psi  = \sum_{h \in H} \Psi(h) c_h i \prt{ \begin{pmatrix} C & D \end{pmatrix} h }_k u^{\begin{pmatrix} C & D \end{pmatrix} h} .
\end{equation*}
Thus,
\begin{equation*}
(\delta_k \Psi) (r, \phi) = i \Psi(r,\phi) \prt{ \begin{pmatrix} C & D \end{pmatrix} \begin{pmatrix} r \\ \phi \end{pmatrix} }_k .
\end{equation*}

Now note that $\nabla_k$ is skew-adjoint wrt. $\hip{\cdot}{\cdot}_2$. Hence, for $(r,\phi) \in H$,
\begin{equation*}
\begin{split}
\prt{ \lip{D}{\nabla_k f}{g} + \lip{D}{f}{\nabla_k g} }(r,\phi) & = \hip{\nabla_k f}{u_{(r,\phi)} g}_2 + \hip{f}{u_{(r,\phi)}\nabla_k g}_2
\\
& = \hip{f}{[u_{(r,\phi)}, \nabla_k]g}_2 
\\
& =  i \prt{ \begin{pmatrix} C & D \end{pmatrix} \begin{pmatrix} r \\ \phi \end{pmatrix} }_k \lip{D}{f}{g}(r,\phi) .
\end{split}
\end{equation*}
Hence, $(\nabla_k)$ is compatible with the inner product.

Lastly, we must show that $C^*(H^\bot,\bar\beta)$ is a non-commutative torus and that for some $\epsilon \in \GL_{2p}(\RR)$, $(\sum_l \epsilon_{kl} \nabla_l)$ is a compatible connection on $\schwartz(\RR^{2p})$ as a right $\schwartz(H^\bot, \bar\beta)$-module.

As $H$ is a lattice, so is $H^\bot$ \cite[Lemma 3.1]{Rieffel-Projective}, hence $H^\bot \cong \ZZ^{2p}$ and $C^*(H^\bot,\bar\beta)$ is a non-commutative torus. As for the left module we can find a connection compatible with the inner product given by
\begin{equation*}
\tilde{\nabla}_k := \sum_{l=1}^p \tilde{C}_{kl} s_l + \sum_{l=1}^p \tilde{D}_{kl} \frac{\partial}{\partial x_l}
\end{equation*}
where $\begin{pmatrix} \tilde{C} & \tilde{D} \end{pmatrix}$ is invertible. Thus, letting $\epsilon := \begin{pmatrix} \tilde{C} & \tilde{D} \end{pmatrix} \begin{pmatrix} C & D \end{pmatrix}^{-1} \in \GL_{2p}(\RR)$ we see that $(\sum_l \epsilon_{kl} \nabla_l) = (\tilde{\nabla}_k)$. Hence, $(\nabla_k)$ is an integrable connection on $\schwartz(\RR^p)$ as a left $\schwartz(H,\beta)$-module.
\end{example}

%\bibliographystyle{alpha} 
%\bibliography{YangMills}
\end{document}